\theoremstyle{plain}
\newtheorem{thm}{Theorem}[section]
\newtheorem{lem}[thm]{Lemma}
\newtheorem{prop}[thm]{Proposition}
\theoremstyle{definition}
\newtheorem{rem}[thm]{Remark}
\newtheorem{defi}[thm]{Definition}
\newtheorem{conv}[thm]{Convention}
\numberwithin{equation}{section}
\def\RHS{\operatorname{RHS}}
\def\LHS{\operatorname{LHS}}
\def\qq{\qquad}
\def\rw{\rightarrow}
\def\supp{\operatorname{supp}}
\def\N{\mathbb N}
\def\R{\mathbb R}
\def\Z{\mathbb Z}
\def\z{\mathcal Z}
\def\ap{\approx}
\def\a{\alpha}
\def\b{\beta}
\def\la{\lambda}
\def\vp{\varphi}
\def\i{\infty}
\def\I{(a,b)}
\def\ls{\lesssim}
\def\gs{\gtrsim}
\def\R{\mathbb R}
\begin{document}
%\today

\title[Reverse Hardy-type inequalities for supremal operators with measures]
{Reverse Hardy-type inequalities for supremal operators with measures}

\author{R.Ch.~Mustafayev, T.~{\"U}nver}

%\address{Rza Mustafayev, Department of Mathematics, Faculty of Science and Arts, Kirikkale
%University, 71450 Yahsihan, Kirikkale, Turkey\\
%\email{rzamustafayev@gmail.com}}
%
%\address{Tugce {\"U}nver, Department of Mathematics, Faculty of Science and Arts, Kirikkale
%University, 71450 Yahsihan, Kirikkale, Turkey\\
%\email{tugceunver@gmail.com}}

%\CorrespondingAuthor{Rza Mustafayev}

%\date{DD.MM.YYYY}

\keywords{ supremal operator; reverse Hardy-type inequality; Borel
measures; weight functions; discretization}

\subjclass[2000]{ 26D10, 26D15, 46E30 }

\begin{abstract}
In this paper we characterize the validity of the inequalities
\begin{equation*}\label{eq.0.1.1}
\|g\|_{p,(a,b),\la} \le c \|u(x)
\|g\|_{\i,(x,b),\mu}\|_{q,(a,b),\nu}
\end{equation*}
and
\begin{equation*}\label{eq.0.1.2}
\|g\|_{p,(a,b),\la} \le c \|u(x)
\|g\|_{\i,(a,x),\mu}\|_{q,(a,b),\nu}
\end{equation*}
for all non-negative Borel measurable functions $g$ on the interval
$(a,b) \subseteq \R$, where $0 < p \le +\i$, $0 < q \le +\i$, $\la$,
$\mu$ and $\nu$ are non-negative Borel measures on $(a,b)$, and $u$
is a weight function on $(a,b)$.
\end{abstract}

\maketitle

\section{Introduction}\label{in}

In \cite{EvGogOp}, authors make a comprehensive study of general inequalities of
the form
\begin{equation}\label{eq.1.1.1}
\|gw\|_{p,(a,b),\mu} \le c \left\|u(x) \|g\|_{1,(x,b),\mu}
\right\|_{q,(a,b),\nu}, \qq g \in B^+(I)
\end{equation}
and
\begin{equation}\label{eq.1.1.2}
\|gw\|_{p,(a,b),\mu} \le c \left\|u(x) \|g\|_{1,(a,x),\mu}
\right\|_{q,(a,b),\nu}, \qq g \in B^+(I),
\end{equation}
involving non-negative Borel measures $\mu$, $\nu$ and $\la$, with
complete proofs and estimates for the best constants $c$, provided
that $0 < p \le 1$ and $0 < q \le +\i$. In addition to the extra
generality and the filling  gaps in previous works on these
inequalities, the approach used in \cite{EvGogOp} unifies the
continuous and discrete problems, so that the integral and series
inequalities follow as particular cases. The general inequalities
involving three Borel measures $\la$, $\mu$ and $\nu$
\begin{equation}\label{eq.1.3.1}
\|g\|_{p,(a,b),\la} \le c \left\|u(x) \|g\|_{1,(x,b),\mu}
\right\|_{q,(a,b),\nu}, \qq g \in B^+(I)
\end{equation}
and
\begin{equation}\label{eq.1.3.2}
\|g\|_{p,(a,b),\la} \le c \left\|u(x) \|g\|_{1,(a,x),\mu}
\right\|_{q,(a,b),\nu}, \qq g \in B^+(I),
\end{equation}
are reduced to either to \eqref{eq.1.1.1} or \eqref{eq.1.1.2}.

The object of this paper is to characterize the inequalities
\begin{equation}\label{eq.1.1.3}
\|gw\|_{p,(a,b),\mu} \le c \|u(x)
\|g\|_{\i,(x,b),\mu}\|_{q,(a,b),\nu}
\end{equation}
and
\begin{equation}\label{eq.1.1.4}
\|gw\|_{p,(a,b),\mu} \le c \|u(x)
\|g\|_{\i,(a,x),\mu}\|_{q,(a,b),\nu}
\end{equation}
for all non-negative Borel measurable functions $g$ on the interval
$(a,b) \subseteq \R$, where $0 < p \le +\i$, $0 < q \le +\i$, $\mu$
and $\nu$ are non-negative Borel measures on $(a,b)$. Note that we
do not need the restriction $0 < p \le 1$, which is important when
one consider the reverse Hardy inequalities.  The general
inequalities (involving three non-negative Borel measures $\la$,
$\mu$ and $\nu$) are reduced either to \eqref{eq.1.1.3} or
\eqref{eq.1.1.4}.

The main results of the present paper are Theorems \ref{thm.1.1.1} and \ref{thm.1.2.1}.
Our method is based on a discretization techniques for function norms developed in \cite{EvGogOp}.

The paper is organized as follows. We start with notation and
preliminary results in Section~\ref{s.2}. Necessary and sufficient
conditions for the validity of inequalities \eqref{eq.1.1.3} and
\eqref{eq.1.1.4} can be found in Sections \ref{s.3} and \ref{s.4},
respectively. Finally, in Section \ref{s.5} we show that the results
from Sections \ref{s.3} and \ref{s.4} can be used to characterize
the validity of inequalities mentioned at the Abstract of this
paper.

\section{Notation and preliminaries} \label{s.2}

Throughout the paper, we always denote by  $c$ a positive constant,
which is independent of main parameters but it may vary from line to
line. However a constant with subscript such as $c_1$ does not
change in different occurrences. By $a\lesssim b$ ($b\gtrsim a$), we
mean that $a\leq c b$, where $c > 0$ depends on inessential
parameters. If $a\lesssim b$ and $b\lesssim a$, we write $a\approx
b$ and say that $a$ and $b$ are  {\it equivalent}. We use the
abbreviation $\LHS (*)$ ($\RHS(*)$) for the left (right) hand side
of the relation $(*)$.

We adopt the following usual conventions.
\begin{conv}\label{Notat.and.prelim.conv.1.1}
{\rm (i)} We put $1/(\pm\infty)=0$, $0\cdot(\pm \infty)=0$, $0/0=0$.

{\rm (ii)} We denote by
$$
p':=\begin{cases} \frac p{1-p}&\text{if} \quad 0<p<1,\\
                      +\infty &\text{if}\quad p=1, \\
                   \frac p{p-1}  &\text{if}\quad 1<p<+\infty,\\
          1  &\text{if}\quad p=+\infty.
\end{cases}
$$

{\rm (iii)} If $g$ is a monotone function on $I : = (a,b) \subseteq
\R$, then by $g(a)$ and $g(b)$ we mean the limits $\lim_{x\rw
a+}g(x)$ and $\lim_{x\rw b-}g(x)$, respectively.
\end{conv}

Let $\mu$ be a non-negative Borel measure on $I$. We denote by
$B^+(I)$ the set of all non-negative Borel measurable functions on
$I$. If $E$ is a nonempty Borel measurable subset of $I$ and $f$ is
a Borel measurable function on $E$, then we put
\begin{align*}
\|f\|_{p,E,\mu} & : = \left(\int_{E}|f(y)|^p
d\mu\right)^{{1} / {p}},  \quad \text{if}\quad 0<p<+\infty, \\
\|f\|_{\i,E,\mu} & : =\sup \{\a:~ \mu (\{y \in E:~ |f(y)| \ge \a\})
> 0\}.
\end{align*}

In this paper, $u$, $v$ and $w$ will denote weights, that is,
non-negative Borel measurable functions on $I$.

Let $\emptyset \neq \z \subseteq \overline{\Z} : = \Z \cup \{
-\infty ,+\infty\}$, $0 < q \le +\i$ and $\{w_k\} = \{w_k\}_{k \in
\Z}$ be a sequence of positive numbers. We denote by
$\ell^q(\{w_k\},\z)$ the following discrete analogue of a weighted
Lebesgue space: if $0 < q < + \i$, then
\begin{align*}
&\ell^q(\{w_k\},\z) = \bigg\{ \{a_k\}_{k \in \z}:
\|\{a_k\}\|_{\ell^q(\{w_k\},\z)}
: = \bigg(\sum_{k\in\z}|a_k w_k|^q\bigg)^{1 / q}<+\infty \bigg\},\\
\intertext{and} &
\ell^\infty(\{w_k\},\z) =\left\{ \{a_k\}_{k\in\z}:
\|\{a_k\}\|_{\ell^\infty(\{w_k\},\z)} : =
\sup_{k\in\z}|a_kw_k|<+\infty \right\}.
\end{align*}
If $w_k=1$ for all $k \in \z$, we write simply $\ell^q(\z)$ instead
of $\ell^q(\{w_k\},\z)$. When $N,\,M \in \overline{\Z}$, $N \le M$
and $\z = \{N,N+1,\ldots,M-1,M\}$, we will sometimes use notation
$\ell^q (N,M)$ instead of $\ell^q (\z)$.

We shall use the following inequality, which is a simple consequence
of the discrete H\"{o}lder inequality:
\begin{equation}\label{discrete.Hold.}
\|\{ a_k b_k \}\|_{\ell^q (\z)} \le \|\{ a_k \}\|_{\ell^r (\z)} \|\{
b_k \}\|_{\ell^p (\z)},
\end{equation}
where $1 / r = (1 / q - 1 / p)_+$.\footnote{For any $a\in\R$ denote
by $a_+ = a$ when $a>0$ and $a_+ = 0$ when $a \le 0$.}

\begin{defi} \label{D:2.1}
Let $N,M\in \overline{\Z}$, $N<M$. A positive almost non-increasing
sequence $\{\tau _k\} _{k=N}^M$ (that is, there exists $K \ge 1$
such that $\tau_{n+1} \le K \tau_n$) is called {\it almost
geometrically decreasing} if there are $\alpha \in (1,+\infty )$ and
$L\in \N$ such that
$$
\a \, \tau _k\leq \tau _{k-L} \quad \text{for all} \quad k\in \{
N+L, \dots ,M\}.
$$
A positive almost non-decreasing sequence $\{\sigma _k\} _{k=N}^M$
(that is, there exists $K \ge 1$ such that $\sigma_{n} \le K
\sigma_{n+1}$) is called {\it almost geometrically increasing} if
there are $\alpha \in (1, +\infty )$ and $L\in \N$ such that
$$
\sigma _k\geq \alpha \sigma _{k-L} \quad \text{for all} \quad k\in
\{ N+L, \dots ,M\}.$$
\end{defi}

\begin{rem} \label{R:2.1}
Definition~\ref{D:2.1} implies that  if  $0<q <+\infty$, then the
following three statements are equivalent:

\textup{(i)} \,\,\,\, $\{ \tau_k\} _{k=N}^M \,\,\,\,\,\,
\text{is  an almost geometrically decreasing sequence;} $

\textup{(ii)} \,\, $\{ \tau_k^q\}_{k=N}^M \,\,\,\,\,\, \text{is an
almost geometrically decreasing sequence;}$

 \textup{(iii)} \, $ \{ \tau_k^{-q }\}_{k=N}^M \,\, \text{is an almost
geometrically increasing sequence.}$
\end{rem}

We quote some known results. Proofs  can be found in \cite{Le1} and
\cite{Le2}.
\begin{lem}\label{L:1.2}
Let $q\in (0,+\infty ]$, $N, M \in \overline{\Z}$, $ N\le M$, $\z =
\{N,N+1,\ldots,M-1,M\}$ and let $\{ \tau_k\} _{k=N}^M$ be an almost
geometrically decreasing sequence. Then

\begin{equation}\label{E1.1}
\left\| \left\{\tau _k \sum_{m=N}^{k}a_m\right\}\right\|
_{\ell^q(\z)} \approx \| \{\tau _ka_k\}\| _{\ell^q(\z)}
\end{equation}
and

\begin{equation}\label{E1.2}
\bigg\| \bigg\{\tau _k \sup _{N\leq m\leq k}a_m \bigg\}\bigg\|
_{\ell^q(\z)} \approx \|\{\tau _ka_k\}\| _{\ell^q(\z)}
\end{equation}
for all non-negative sequences $\{ a_k\} _{k=N}^M$.
\end{lem}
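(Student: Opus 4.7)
The plan is as follows. In both equivalences, the trivial estimates $a_k \leq \sum_{m=N}^k a_m$ and $a_k \leq \sup_{N \leq m \leq k} a_m$ give the lower bound $\|\{\tau_k a_k\}\|_{\ell^q(\z)} \leq \LHS$ immediately, so the entire content of the lemma lies in the upper bounds.

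The central tool is to fuse the two hypotheses on $\{\tau_k\}$ into a single pointwise geometric estimate. Iterating $\alpha\tau_k \leq \tau_{k-L}$ yields $\tau_k \leq \alpha^{-j}\tau_{k-jL}$ whenever $k-jL \geq N$, while iterating $\tau_{n+1} \leq K\tau_n$ yields $\tau_{m+r} \leq K^r\tau_m$. Writing any pair $N \leq m \leq k \leq M$ as $k-m = jL+r$ with $0 \leq r < L$ and combining the two bounds produces a constant $C > 0$ and a number $\beta := \alpha^{-1/L} \in (0,1)$ with
\begin{equation*}
\frac{\tau_k}{\tau_m} \leq C\beta^{k-m}, \qquad N \leq m \leq k \leq M.
\end{equation*}
This single inequality drives every remaining case, so establishing it carefully is the one genuine obstacle in the proof.

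To prove \eqref{E1.1}, set $b_m := \tau_m a_m$ and rewrite
\begin{equation*}
\tau_k \sum_{m=N}^{k} a_m = \sum_{m=N}^{k} \frac{\tau_k}{\tau_m}\, b_m \leq C\sum_{m=N}^{k} \beta^{k-m} b_m,
\end{equation*}
which exhibits $\tau_k \sum_{m=N}^k a_m$ as controlled by a one-sided discrete convolution of $\{b_m\}$ with the $\ell^1$-summable kernel $\{\beta^{j}\}_{j \geq 0}$. For $1 \leq q \leq \infty$ the desired bound then follows from discrete Young/Minkowski, since the kernel has $\ell^1$-norm $(1-\beta)^{-1}$. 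For $0 < q < 1$ one instead uses the elementary subadditivity $\bigl(\sum x_i\bigr)^q \leq \sum x_i^q$ to obtain $\bigl(\tau_k\sum_{m\leq k} a_m\bigr)^q \leq C^q \sum_{m\leq k} \beta^{q(k-m)} b_m^q$ and then swaps the order of summation, picking up the finite factor $(1-\beta^q)^{-1}$.

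To prove \eqref{E1.2}, the same geometric estimate gives
\begin{equation*}
\tau_k \sup_{N \leq m \leq k} a_m = \sup_{N \leq m \leq k} \frac{\tau_k}{\tau_m}\, b_m \leq C \sup_{N \leq m \leq k} \beta^{k-m} b_m.
\end{equation*}
For $q = \infty$ this already gives what we want after one more sup. For finite $q$ one dominates the supremum by the corresponding $\ell^q$-sum $\bigl(\sum_{m\leq k} \beta^{q(k-m)} b_m^q\bigr)^{1/q}$, reducing \eqref{E1.2} to the bound already proved for \eqref{E1.1}. All occurring constants depend only on $\alpha$, $K$, $L$ and $q$, as required.
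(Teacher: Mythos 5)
The paper itself offers no proof of this lemma: it is quoted as a known result with a pointer to Leindler's papers \cite{Le1} and \cite{Le2}. Your argument is therefore necessarily a "different route," namely an actual self-contained proof, and it is correct. The lower bounds are indeed immediate from non-negativity of $\{a_k\}$. Your key reduction is sound: writing $k-m=jL+r$ with $0\le r<L$, iterating $\a\,\tau_k\le\tau_{k-L}$ gives $\tau_k\le\a^{-j}\tau_{m+r}$ (the needed indices stay in range because $k-jL=m+r\ge N$), the almost non-increasing property gives $\tau_{m+r}\le K^{L-1}\tau_m$, and $\a^{-j}\le\a\,\b^{\,k-m}$ with $\b=\a^{-1/L}$, yielding $\tau_k/\tau_m\le C\b^{\,k-m}$ with $C=\a K^{L-1}$. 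From there the convolution bound via discrete Young for $1\le q\le+\i$, the subadditivity $(\sum x_i)^q\le\sum x_i^q$ plus Fubini for $0<q<1$, and the domination of the supremum by the $\ell^q$-sum for \eqref{E1.2} are all standard and correctly executed; the constants depend only on $\a$, $K$, $L$ and $q$, uniformly in $N$, $M$ and $\{a_k\}$, including the cases $N=-\infty$ or $M=+\infty$. What your approach buys is a short, transparent, and fully quantitative proof in place of an external citation; the only point worth flagging is that you should state explicitly that the almost non-increasing constant $K$ from Definition~\ref{D:2.1} is being used alongside $\a$ and $L$, since the geometric decay condition alone does not control $\tau_{m+r}/\tau_m$ for the remainder term $r$.
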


Given two (quasi-) Banach spaces $X$ and $Y$, we write $X
\hookrightarrow Y$ if $X \subset Y$ and if the natural embedding of
$X$ in $Y$ is continuous.

The following two lemmas are discrete version of the classical
Landau resonance theorems. Proofs can be found, for example, in
\cite{gp1}.
\begin{prop}\label{prop.2.1}{\rm(\cite[Proposition 4.1]{gp1})}
Let $0 < p,\, q \le +\i$, $\emptyset \neq \z \subseteq
\overline{\Z}$ and let $\{v_k\}_{k\in\z}$ and $\{w_k\}_{k\in\z}$ be
two sequences of positive numbers. Assume that
\begin{equation}\label{eq31-4651}
\ell^p (\{v_k\},\z) \hookrightarrow \ell^q (\{w_k\},\z).
\end{equation}
Then
\begin{equation*}\label{eq31-46519009}
\|\{w_k v_k^{-1}\}\|_{\ell^r(\z)} \le c,
\end{equation*}
where $1 / r = ( 1 / q - 1 / p)_+$ and $c$ stands for the norm of
embedding \eqref{eq31-4651}.
\end{prop}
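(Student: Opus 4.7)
The proposition is a discrete Landau resonance principle, so the plan is to test the embedding on carefully chosen sequences and read off the bound on $\|\{w_k v_k^{-1}\}\|_{\ell^r(\z)}$. I would split into two cases depending on the sign of $1/q - 1/p$.

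In the easy case $p \le q$ we have $r = +\i$, and it suffices to show $w_k/v_k \le c$ for every fixed $k \in \z$. For this I would feed the embedding the single-point sequence $a_j := v_k^{-1}\delta_{jk}$ (where $\delta$ is the Kronecker symbol). Then $\|\{a_j\}\|_{\ell^p(\{v_j\},\z)} = 1$ and $\|\{a_j\}\|_{\ell^q(\{w_j\},\z)} = w_k/v_k$, so the embedding instantly gives $w_k/v_k \le c$; taking the supremum over $k$ finishes this case.

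The main case is $p > q$, where $r = pq/(p-q)$ and the algebraic identity
\[
1 + \frac{r}{p} = \frac{r}{q}
\]
(which is just a rewriting of $1/r = 1/q - 1/p$) will do the work. I would first reduce to a finite set $\z_0 \subseteq \z$ by monotone convergence, so that the quantity $S := \sum_{k \in \z_0}(w_k/v_k)^r$ is automatically finite. Then I would plug into the embedding the ``resonating'' test sequence
\[
a_k := \frac{1}{v_k}\,\Bigl(\frac{w_k}{v_k}\Bigr)^{r/p}, \qq k \in \z_0,
\]
(and $a_k=0$ off $\z_0$). A direct computation gives $|a_k v_k|^p = (w_k/v_k)^r$ and, using $q(1+r/p)=r$, also $|a_k w_k|^q = (w_k/v_k)^r$, so both norms equal $S^{1/p}$ and $S^{1/q}$ respectively. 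The embedding hypothesis then reads $S^{1/q}\le c\,S^{1/p}$, which (since $S$ is finite and positive, else there is nothing to prove) rearranges to $S^{1/r}\le c$. Taking the supremum over finite subsets $\z_0$ yields the claim. The boundary cases $p=\i$ or $q=\i$ are covered by letting $r/p$ or $r/q$ collapse to $0$ or $1$ in the same formula.

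The only real subtlety is choosing the test sequence in Case~2 so that both the $\ell^p(\{v_k\})$-norm and the $\ell^q(\{w_k\})$-norm of $a$ simultaneously become (powers of) the target sum $\sum (w_k/v_k)^r$; once the exponent identity $1+r/p = r/q$ is spotted, the rest is bookkeeping. The finite-truncation step is standard but must be included so that the test sequence is admissible before one knows the sum is finite.
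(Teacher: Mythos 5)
Your proof is correct. The paper itself gives no proof of this proposition---it is quoted verbatim from \cite[Proposition~4.1]{gp1}---and your argument (testing with a single-point sequence $a_j=v_k^{-1}\delta_{jk}$ when $p\le q$, and with the resonating sequence $a_k=v_k^{-1}(w_k/v_k)^{r/p}$ over finite subsets together with the exponent identity $1+r/p=r/q$ when $q<p$) is precisely the standard Landau-resonance argument used in that source, including the finite-truncation step needed to make the test sequence admissible.
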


Now we recall some basic facts on discretization of function norms
from \cite{EvGogOp}.
\begin{lem} \label{L:1.6}{\rm(\cite[Lemma~3.1]{EvGogOp})}
Let $\varphi$  be a non-negative, non-decreasing, finite and
right-continuous function on $(a,b)$. There is a strictly increasing
sequence $\{x_k\}_{k= N}^{M+1}$,  $-\infty\le N\le M\le +\infty$,
with elements from the closure of the interval $(a,b)$,   such that:

\textup{(i)} if $N>-\infty$, then $\varphi(x_N)>0$; $\varphi(x)=0$
for every $x\in (a,x_N)$; if $M<+\infty$, then $x_{M+1}=b$;

\textup{(ii)} $\varphi(x_{k+1}-)\le 2\varphi(x_k)$ \,\,\,\,\, if
\quad $ N\le k\le M$;

\textup{(iii)} $2\varphi(x_{k}-)\leq \varphi(x_{k+1})$ \,\,\, if
\quad $ N< k< M$.
\end{lem}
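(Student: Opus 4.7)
The plan is to build $\{x_k\}$ by a dyadic-type iteration in which $\varphi(x_{k+1})\approx 2\varphi(x_k)$. The forward step, given $x_k$ with $\varphi(x_k)>0$, is to put
\[
x_{k+1} := \sup\bigl\{x \in (a,b) : \varphi(x) \le 2\varphi(x_k)\bigr\}.
\]
Since $\varphi$ is non-decreasing, the set above is an interval with left endpoint $a$; right-continuity at $x_k$ gives $\varphi(x_k+\varepsilon) \to \varphi(x_k) < 2\varphi(x_k)$ as $\varepsilon\to 0+$, so points just to the right of $x_k$ lie in the set and therefore $x_{k+1} > x_k$. By construction, $\varphi(y) \le 2\varphi(x_k)$ for every $y<x_{k+1}$, yielding $\varphi(x_{k+1}-) \le 2\varphi(x_k)$, which is (ii). For $y>x_{k+1}$ one has $\varphi(y)>2\varphi(x_k)$, and right-continuity at $x_{k+1}$ then gives $\varphi(x_{k+1}) \ge 2\varphi(x_k) \ge 2\varphi(x_k-)$, which is (iii).

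I would then determine $N$ and $M$ according to the boundary behaviour of $\varphi$. For the right end, if $\varphi(b-)=+\infty$ the values $\varphi(x_k)$ grow without bound (at least doubling at each step) and the iteration never terminates, so $M:=+\infty$; if $\varphi(b-)<+\infty$ the forward step is terminated at the first index $k$ for which $\varphi(b-) \le 2\varphi(x_k)$, and we set $M:=k$ and append $x_{M+1}:=b$. Condition (ii) at $k=M$ then reduces exactly to this stopping criterion, and (iii) is not required at $k=M$.

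For the left end, let $c := \sup\{x \in (a,b) : \varphi(x)=0\}$, with $\sup\emptyset := a$; we may assume $c<b$, else $\varphi\equiv 0$ and the statement is vacuous. There are three subcases. If $c>a$ and $\varphi(c)>0$, the function $\varphi$ jumps off zero at $c$; take $x_N:=c$ for any fixed integer $N$, and (i) holds because $\varphi\equiv 0$ on $(a,c)$ and $\varphi(x_N)>0$. If $\varphi(a+)>0$, take $x_N:=a$, interpreting $\varphi(x_N)=\varphi(a+)$ via Convention~\ref{Notat.and.prelim.conv.1.1}(iii); then $(a,x_N)=\emptyset$ and (i) is vacuous. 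In the remaining case $\varphi(a+)=\varphi(c)=0$, set $N:=-\infty$ and build the sequence backward from any chosen $x_0 \in (c,b)$ by the mirror prescription
\[
x_{k-1} := \inf\bigl\{x \in (a,b) : \varphi(x)\ge \tfrac{1}{2}\varphi(x_k-)\bigr\},
\]
whose analysis is the exact analogue of the forward step; iterating (iii) shows that $\varphi(x_k)$ tends monotonically to $0$ as $k\to-\infty$, so the $x_k$ accumulate at $c$ and the construction does not stall.

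The only real obstacle is the bookkeeping at the endpoints: matching the clause ``$\varphi\equiv 0$ on $(a,x_N)$'' in (i) against the three possible behaviours of $\varphi$ near $a$ (strictly positive at $a+$; zero at $a+$ with a subsequent jump off zero; or zero at $a+$ with no jump), and the analogous dichotomy at $b$ governing the terminal clause $x_{M+1}=b$. Once the right choices of $x_N$, $x_{M+1}$, $N$ and $M$ have been made, the dyadic step itself, the strict monotonicity of $\{x_k\}$, and the verifications of (ii) and (iii) all reduce to routine applications of right-continuity and monotonicity of $\varphi$.
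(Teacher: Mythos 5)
The paper does not actually prove this lemma --- it is quoted without proof from \cite{EvGogOp}*{Lemma~3.1} --- but your construction is correct and is essentially the standard dyadic discretization argument behind that result: take $x_{k+1}$ to be the last point up to which $\varphi$ has at most doubled (so that right-continuity forces $\varphi(x_{k+1})\ge 2\varphi(x_k)$ and monotonicity gives $\varphi(x_{k+1}-)\le 2\varphi(x_k)$), stop at $b$ once $\varphi(b-)\le 2\varphi(x_k)$, and run the mirror step backwards with $N=-\infty$ when $\varphi$ decays to zero at the left. The only blemish is calling the case $\varphi\equiv 0$ ``vacuous'': the lemma still asserts the existence of a sequence there, but it is trivially satisfied by taking $N=-\infty$, $M=+\infty$ and any strictly increasing sequence, so nothing is lost.
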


\begin{defi} \label{L:2.2}{\rm(\cite[Definition~3.2]{EvGogOp})}
Let $\varphi$  be a non-negative, non-decreasing, finite and
right-continuous function on $(a,b)$. A strictly increasing sequence
$\{x_k\}_{k= N}^{M+1}$, $-\infty \le N <M \le +\infty $, is said to
be  {\it a~discretizing sequence of the function} $\varphi$ if it
satisfies the conditions (i) -- (iii) of
 Lemma~\ref{L:1.6}.
\end{defi}
\begin{rem} \label{R:3.2.1}{\rm(\cite[Remark~3.3]{EvGogOp})}
We shall use the following {\it convention}: if $N=-\infty$, then we
put $x_N=\lim_{k\to-\infty}x_k$. It is clear that if   $N=-\infty$
and $x_N>a$, then  $\varphi(x)=0$  for all $x\in (a,x_N)$
(cf.~condition (i) of Lemma~\ref{L:1.6}).
\end{rem}

Let $\vp$ be a non-negative, non-decreasing, finite and
right-continuous function on $(a,b)$. Using a discretizing sequence
$\{x_k\}_{k=N}^{M+1}$ of $\vp$, we define the sequence of intervals
$\{J_k\}_{k=N}^M$ as follows:
\begin{equation}\label{eq.2.2.1}
J_i = (x_i,x_{i+1}],\quad \mbox{if} \quad N \le i < M, \quad
\mbox{and} \quad J_M = (x_M,b) \quad \mbox{if}\quad M < \i.
\end{equation}

\begin{thm}\label{thm.2.2.1}{\rm(\cite[Corollary~3.6 and Corollary~3.7]{EvGogOp})}
Let $0 < q \le +\i$. Suppose that $\mu$ and $\nu$ are non-negative
Borel measures on $I = (a,b)$. Let $u \in B^+(I)$ be such that the
function $\|u\|_{q,(a,t],\nu} < +\i$, $t \in I$. If
$\{x_k\}_{k=N}^{M+1}$ is a discretising sequence of $\vp (t) =
\|u\|_{q,(a,t+],\nu} : = \lim_{s \rw t+} \|u\|_{q,(a,s],\nu}$, $t
\in I$, then
\begin{equation}\label{eq.2.2.2}
\left\|u(x)\|g\|_{\i,(x,b),\mu}\right\|_{q,I,\nu} \ap \left\|
\|g\|_{\i,J_k,\mu} \,\|u\|_{q,(a,x_k+],\nu} \right\|_{\ell^q(N,M)}
\end{equation}
for all $g \in B^+(I)$, where $\{J_k\}_{k=N}^M$ is defined by
\eqref{eq.2.2.1}.
\end{thm}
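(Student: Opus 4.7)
My plan is to partition $I$ along the slabs $J_k$ of the discretizing sequence, sandwich the inner quantity $\|g\|_{\i,(x,b),\mu}$ between block quantities that are constant on each $J_k$, and then collapse the resulting discrete suprema via Lemma~\ref{L:1.2}. Set $F_k := \|g\|_{\i,J_k,\mu}$ and $A_k := \vp(x_k) = \|u\|_{q,(a,x_k+],\nu}$. My first step is to observe that $\{A_k\}$ is \emph{almost geometrically increasing}: condition (iii) of Lemma~\ref{L:1.6}, $2\vp(x_k-)\le\vp(x_{k+1})$, combined with $\vp(x_{k-1})\le\vp(x_k-)$, yields $2A_{k-1}\le A_{k+1}$; by Remark~\ref{R:2.1} this is equivalent to $\{A_k^{-q}\}$ being almost geometrically decreasing, which is exactly the hypothesis of Lemma~\ref{L:1.2} once the roles of the indices are reversed. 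Condition (ii), $\vp(x_{k+1}-)\le 2A_k$, additionally gives $\|u\|_{q,(x_k,x_{k+1}),\nu}\le 2A_k$, so the continuous part of $\nu$ on $J_k$ is controlled by $A_k$ while the possible atom at $x_{k+1}$ is in turn absorbed into $A_{k+1}$.

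Assume first $0 < q < \i$. Using $I=\bigcup_k J_k$ (disjointly) I would decompose
\[
\bigl\|u(x)\|g\|_{\i,(x,b),\mu}\bigr\|_{q,I,\nu}^q = \sum_{k=N}^{M}\int_{J_k} u(x)^q \|g\|_{\i,(x,b),\mu}^q\,d\nu(x),
\]
and exploit the monotonicity of $x\mapsto\|g\|_{\i,(x,b),\mu}$: for $x\in(x_k,x_{k+1})$ the inclusions $(x_{k+1},b)\subseteq(x,b)\subseteq(x_k,b)$ give the sandwich $\sup_{m\ge k+1}F_m \le \|g\|_{\i,(x,b),\mu} \le \sup_{m\ge k}F_m$, while at $x=x_{k+1}$ the left equality holds. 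Splitting each $J_k$-integral into its continuous and atom-at-$x_{k+1}$ parts and combining with the two bounds from the first paragraph produces
\[
\LHS^q \ls \sum_{k} A_k^q\bigl(\sup_{m\ge k}F_m\bigr)^q + \sum_{k} A_{k+1}^q\bigl(\sup_{m\ge k+1}F_m\bigr)^q,
\]
and the reverse-indexed form of \eqref{E1.2}, applicable by the almost-geometric increase of $\{A_k\}$, collapses each sum to $\sum_k A_k^q F_k^q$ after a trivial re-indexing of the second. For the converse inequality I would combine the lower half of the sandwich on a well-chosen sub-interval $(x_k,\xi_k)\subseteq J_k$ on which $\|g\|_{\i,(x,b),\mu}\ge F_k-\ve$ (with $\xi_k$ supplied by the definition of the essential supremum) with the sparsification furnished by $2A_{k-1}\le A_{k+1}$ to recover the matching lower bound. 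The case $q=\i$ is handled by the very same sandwich with sums replaced by suprema and \eqref{E1.2} used in its supremum form.

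The main obstacle is the lower estimate: the sandwich supplies only the index-shifted supremum $\sup_{m\ge k+1}F_m$, so restoring the $k$-th term $F_k$ on the right of \eqref{eq.2.2.2} forces a careful selection inside each $J_k$ of either a sub-interval or an atom of $\nu$ on which $\|g\|_{\i,(x,b),\mu}$ is at least a positive fraction of $F_k$. This is precisely where the almost-geometric property of $\{A_k\}$ plays the decisive role: it guarantees a sparse sub-sequence of indices on which the increments $A_{k+1}^q-A_k^q$ are comparable to $A_{k+1}^q$, yielding the correct weights. A secondary technical nuisance is the bookkeeping of possible atoms of $\nu$ at the break-points $x_k$, but these are uniformly controlled by $\vp(x_{k+1}-)\le 2A_k\le A_{k+1}$ and contribute only bounded multiplicative constants.
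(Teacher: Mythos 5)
The paper does not prove Theorem~\ref{thm.2.2.1} at all: it is imported verbatim from \cite[Corollaries~3.6 and 3.7]{EvGogOp}, so there is no in-paper argument to compare against. Your plan is, in outline, the standard discretization argument that the cited source uses: split $I$ into the blocks $J_k$, use the monotonicity of $x\mapsto\|g\|_{\i,(x,b),\mu}$ to sandwich it between $\sup_{m\ge k+1}F_m$ and $\sup_{m\ge k}F_m$ on $J_k$, control $\|u\|_{q,(x_k,x_{k+1}),\nu}$ by $2A_k$ via condition (ii) and the atom at $x_{k+1}$ by $A_{k+1}$, and collapse $\sum_k A_k^q(\sup_{m\ge k}F_m)^q$ to $\sum_k A_k^qF_k^q$ by the index-reversed form of \eqref{E1.2}, which applies because $\{A_k\}$ is almost geometrically increasing (Remark~\ref{lem.2.2.1}). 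The upper estimate as you describe it is correct.

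The one step that would fail as written is your first mechanism for the lower bound. Choosing $\xi_k>x_k$ with $\|g\|_{\i,(x,b),\mu}\ge F_k-\ve$ on $(x_k,\xi_k)$ is possible (continuity of $\mu$ from below), but it buys you nothing: the factor you then integrate is $\|u\|_{q,(x_k,\xi_k),\nu}^q$, which can be $0$ or arbitrarily small compared with $A_k^q$, since $\nu$ need not charge any right-neighbourhood of $x_k$. The $\nu u^q$-mass that must be paired with $F_k$ lives to the \emph{left} of $x_k$, not inside $J_k$: for every $x\in(x_{k-2},x_k]$ one has $(x,b)\supseteq J_k$, hence $\|g\|_{\i,(x,b),\mu}\ge F_k$ with no $\ve$ and no selection needed, while
\[
\|u\|_{q,(x_{k-2},x_k],\nu}^q=A_k^q-A_{k-2}^q\ge(1-2^{-q})A_k^q
\]
by the sparsification $2A_{k-2}\le2\vp(x_{k-1}-)\le\vp(x_k)=A_k$ coming from Lemma~\ref{L:1.6}\,(iii). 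Summing these over $k$ (each $J_j$ is used at most twice) gives $\sum_{k\ge N+2}A_k^qF_k^q\ls\LHS^q$, and the terms $k=N,N+1$ are recovered separately from $\int_{(a,x_{N+1}]}$ and, when $x_N>a$, from the atom of $\nu$ at $x_N$ (where $A_N^q=u(x_N)^q\nu(\{x_N\})$ because $\vp$ vanishes on $(a,x_N)$). So the decisive idea is the one you name in your closing paragraph; the $(x_k,\xi_k)$ device should simply be discarded, and the boundary indices and the $q=+\i$ right-limit bookkeeping still need to be written out.
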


\begin{rem}\label{lem.2.2.1}
Lemma~\ref{L:1.6} (iii), implies that
$\{\|u\|_{q,(a,x_k+],\nu}\}_{k=N}^{M}$ in Theorem \ref{thm.2.2.1} is
an almost geometrically increasing sequence. (We can take $\alpha
=L=2$ in Definition~\ref{D:2.1}).
\end{rem}

\begin{rem}\label{lem.2.3.1}
Let $q < +\i$. Then
$$
\|u\|_{q,(a,x+],\nu} = \|u\|_{q,(a,x],\nu} \qq\mbox{for all}\qq x
\in I.
$$
\end{rem}

In this paper we shall need the Lebesgue-Stieltjes integral. To this
end, we recall some basic facts.

Let $\vp$ be non-decreasing and finite function on the interval $I :
= (a,b)\subseteq \R$. We assign to $\vp$ the function $\la$ defined
on subintervals of $I$ by
\begin{align}
\la ([\a,\b]) & = \vp (\b+) - \vp(\a-), \label{eq+-}\\
\la ([\a,\b)) & = \vp (\b-) - \vp(\a-), \label{eq--}\\
\la ((\a,\b]) & = \vp (\b+) - \vp(\a+), \label{eq++}\\
\la ((\a,\b)) & = \vp (\b-) - \vp(\a+). \label{eq-+}
\end{align}
The function $\la$ is a non-negative, additive and regular function
of intervals. Thus (cf. \cite{ru}, Chapter 10), it admits a unique
extension to a non-negative Borel measure $\la$ on $I$. The
Lebesgue-Stieltjes integral $\int_I f\,d\vp$ is defined as $\int_I
f\,d\la$.

If $J\subseteq I$, then the Lebesgue-Stieltjes integral $\int_J
f\,d\vp$ is defined as $\int_J f\,d\la$. We shall also use the
Lebesgue-Stieltjes integral $\int_J f\,d\vp$ when $\vp$ is a
non-increasing and finite on the interval $I$. In such a case we put
$$
\int_J f\,d\vp : = - \int_J f\,d(-\vp).
$$

If $\vp$ is a non-decreasing, finite and right-continuous function
on $I = (a,b)$ and $J$ is a subinterval of $I$ of the form
$(\a,\b)$, $[\a,\b)$ or $(\a,\b]$, then the formulae \eqref{eq-+},
\eqref{eq--} and \eqref{eq++} imply that
\begin{align}
\int_{(\a,\b)} d\vp & = \vp (\b-) - \vp(\a), \label{eqq+-}\\
\int_{[\a,\b)} d\vp & = \vp (\b-) - \vp(\a-), \label{eqq--}\\
\int_{(\a,\b]} d\vp & = \vp (\b) - \vp(\a). \label{eqq++}
\end{align}

In this paper the role of the function $\vp$ will be played by a
function $h$ which will be {\it non-decreasing} and {\it
right-continuous} or {\it non-increasing} and {\it left-continuous}
on $I$. At the first case, the associated Borel measure $\la$ will
be determined by (cf. \eqref{eq++})
\begin{equation}\label{eq.alphabeta+}
\la((\a,\b]) = h(\b) - h(\a) \qq \mbox{for any}\qq (\a,\b] \subset I
\end{equation}
(since the Borel subsets of $I$ can be generated by subintervals
$(\a,\b] \subset I$).

Considering inequalities \eqref{eq.1.1.3} and \eqref{eq.1.1.4}, in
the case when $0 < p < q \le +\i$ and $1 / r = 1 / p - 1 / q$, we
shall write conditions characterizing the validity of inequalities
in a compact form involving $\int_{(a,b)}f\,dh$. To this end, we
adopt the following conventions from \cite{EvGogOp}.
\begin{conv}\label{conv.1}
Let $I = (a,b) \subseteq \R$, $f: I \rw [0,\i]$ and $h: I \rw
[-\i,0]$. Assume that $h$ is non-decreasing and right-continuous on
$I$. If $h: I \rw (-\i,0]$, then the symbol $\int_I f\,dh$ means the
usual Lebesgue-Stieltjes integral. However, if $h = -\i$ on some
subinterval $(a,c)$ with $c \in I$, then we define $\int_I f\,dh$
only if $f = 0$ on $(a,c]$ and we put
$$
\int_I f\,dh = \int_{(c,b)} f\,dh.
$$
\end{conv}

\begin{conv}\label{conv.2}
Let $I = (a,b) \subseteq \R$, $f: I \rw [0,\i]$ and $h: I \rw
[0,\i]$. Assume that $h$ is non-decreasing and left-continuous on
$I$. If $h: I \rw [0,\i)$, then the symbol $\int_I f\,dh$ means the
usual Lebesgue-Stieltjes integral. However, if $h = +\i$ on some
subinterval $(c,b)$ with $c \in I$, then we define $\int_I f\,dh$
only if $f = 0$ on $[c,b)$ and we put
$$
\int_I f\,dh = \int_{(a,c)} f\,dh.
$$
\end{conv}

\section{Reverse Hardy-type inequalities for supremal
operators}\label{s.3}

In this section we characterize inequality \eqref{eq.1.1.3}. We start with the following discretization lemma.
\begin{lem}\label{lem.3.1.1}
Assume that $0 < p,\,q \le +\i$. Let $\mu$ and $\nu$ be non-negative
Borel measures on $I = (a,b) \subseteq \R$. Let $w \in B^+ (I)$ and
let $u \in B^+(I)$ satisfy $\|u\|_{q,(a,t],\nu} < \i$ for all $t \in
I$ and $u \neq 0$ a.e. on $(a,b)$. If $\{x_k\}_{k=N}^{M+1}$ is a
discretising sequence of $\vp (t) : = \|u\|_{q,(a,t+],\nu}$, then
inequality \eqref{eq.1.1.3} holds for all $g \in B^+ (I)$ if and
only if
\begin{equation}\label{eq.3.1.1}
A : = \left\| \left\{ \|w\|_{p,J_k,\mu}\,\|u\|_{q,(a,x_k+],\nu}^{-1}
\right\} \right\|_{\ell^{\rho}(N,M)} < \i,
\end{equation}
and
\begin{equation}\label{eq.3.1.2}
w = 0 \quad \mu - \mbox{a.e. in} ~ (a,x_N] \quad \mbox{ if} \quad
x_N > a,
\end{equation}
where ${1} / {\rho} : = \left( {1} / {p} - 1 / q \right)_+$.

The best possible constant $c$ in \eqref{eq.1.1.3} satisfies $c \ap
A$.
\end{lem}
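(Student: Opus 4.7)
I would reduce \eqref{eq.1.1.3} to a purely discrete weighted embedding problem by means of the discretization Theorem~\ref{thm.2.2.1}, and then invoke the discrete H\"older inequality~\eqref{discrete.Hold.} (for sufficiency) together with the resonance-type Proposition~\ref{prop.2.1} (for necessity).

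\medskip

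\noindent\emph{Setting up the discrete picture.} Theorem~\ref{thm.2.2.1} already furnishes
$$
\RHS\eqref{eq.1.1.3} \ap \bigl\| \|g\|_{\i,J_k,\mu}\,\|u\|_{q,(a,x_k+],\nu}\bigr\|_{\ell^q(N,M)}.
$$
On the left-hand side, the intervals $\{J_k\}_{k=N}^{M}$ from \eqref{eq.2.2.1} partition $(x_N,b)$, so (with the usual modification when $p=+\i$)
$$
\|gw\|_{p,(a,b),\mu}^p = \|gw\|_{p,(a,x_N],\mu}^p + \sum_{k=N}^M \|gw\|_{p,J_k,\mu}^p.
$$
If $x_N>a$, Lemma~\ref{L:1.6}(i) yields $\vp\equiv 0$ on $(a,x_N)$, hence $u=0$ $\nu$-a.e.\ on $(a,x_N)$; testing \eqref{eq.1.1.3} with characteristic functions of Borel subsets of $(a,x_N]$ of positive $\mu$-measure then makes the RHS vanish and forces condition~\eqref{eq.3.1.2}. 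Under \eqref{eq.3.1.2} the first term above drops out and
$$
\LHS\eqref{eq.1.1.3} = \bigl\| \|gw\|_{p,J_k,\mu}\bigr\|_{\ell^p(N,M)}.
$$

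\medskip

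\noindent\emph{Sufficiency and necessity.} Put $a_k=\|g\|_{\i,J_k,\mu}$, $b_k=\|u\|_{q,(a,x_k+],\nu}$, $c_k=\|w\|_{p,J_k,\mu}$. For sufficiency, the pointwise bound $\|gw\|_{p,J_k,\mu}\le a_kc_k$ combined with the factorisation $a_kc_k=(a_kb_k)(c_k/b_k)$ and~\eqref{discrete.Hold.} gives
$$
\bigl\|\{a_kc_k\}\bigr\|_{\ell^p(N,M)} \le A\,\bigl\|\{a_kb_k\}\bigr\|_{\ell^r(N,M)},
$$
where $r=q$ if $p\le q$ (so $1/\rho=1/p-1/q$) and $r=p$ if $p>q$ (so $\rho=+\i$); in the latter case the monotonicity $\ell^q(N,M)\hookrightarrow\ell^p(N,M)$ of counting-measure norms replaces $\ell^p$ by $\ell^q$. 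Combining with the discrete picture yields $\LHS\eqref{eq.1.1.3}\ls A\cdot\RHS\eqref{eq.1.1.3}$. For necessity, plug $g=\sum_{k=N}^M a_k\chi_{J_k}$ with arbitrary $a_k\ge 0$ into \eqref{eq.1.1.3}: one obtains $\|g\|_{\i,J_k,\mu}=a_k$ and $\|gw\|_{p,J_k,\mu}=a_kc_k$ (terms with $\mu(J_k)=0$ being harmless on both sides), so \eqref{eq.1.1.3} reduces to the weighted embedding
$$
\bigl\|\{a_kc_k\}\bigr\|_{\ell^p(N,M)} \ls \bigl\|\{a_kb_k\}\bigr\|_{\ell^q(N,M)} \qquad \text{for all }\{a_k\}\ge 0.
$$
Proposition~\ref{prop.2.1} then delivers exactly $A\ls c$.

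\medskip

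\noindent\emph{Main obstacle.} The discretise--test--Landau recipe is essentially routine once the discrete picture has been set up; the only delicate point is the necessity of the support condition~\eqref{eq.3.1.2}, which requires translating the definitional property $\vp\equiv 0$ on $(a,x_N)$ into $u=0$ $\nu$-a.e.\ on $(a,x_N)$ and then propagating the vanishing of $w$ all the way up to $x_N$ inclusive (handling the possible $\mu$-atom at $x_N$ by a separate test $g=\chi_{\{x_N\}}$). The unification of the sub-cases $p\le q$ and $p>q$ under the single condition $A<\i$ with $1/\rho=(1/p-1/q)_+$ is managed uniformly through~\eqref{discrete.Hold.} and the elementary $\ell^q\hookrightarrow\ell^p$ embedding.
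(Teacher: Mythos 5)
Your proposal is correct and follows essentially the same route as the paper: discretization of the right-hand side via Theorem~\ref{thm.2.2.1}, the discrete H\"older inequality \eqref{discrete.Hold.} for sufficiency, block test functions supported on the $J_k$ together with Proposition~\ref{prop.2.1} for necessity of $A<\i$, and testing with $\chi_{(a,x_N]}$ for the support condition \eqref{eq.3.1.2}. The only cosmetic difference is that you take $g_k=\chi_{J_k}$ explicitly where the paper uses abstract $g_k$ with $\|g_k\|_{\i,J_k,\mu}=1$ and $\|g_kw\|_{p,J_k,\mu}\gs\|w\|_{p,J_k,\mu}$, which is the same thing in substance.
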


\begin{proof}
By Theorem \ref{thm.2.2.1},
\begin{equation}\label{eq.5.1.2}
\left\|u(x)\|g\|_{\i,(x,b),\mu}\right\|_{q,I,\nu} \ap \left\|
\left\{\|g\|_{\i,J_k,\mu}
\|u\|_{q,(a,x_k+],\nu}\right\}\right\|_{\ell^q(N,M)},
\end{equation}
for all $g \in B^+(I)$, where $\{x_k\}_{k=N}^{M+1}$ is a
discretising sequence of the function $\vp (t) =
\|u\|_{q,(a,t+],\nu}$, $t \in (a,b)$, and $\{J_k\}_{k=N}^M$ is
defined by \eqref{eq.2.2.1}. By Lemma \ref{L:1.6} (cf. also Remark
\ref{R:3.2.1}),
\begin{align}\label{eq.5.1.3}
\text{if}\quad & x_N>a, \quad \text{then} \quad \|u\|_{q,(a,x_N),\nu}=0;\\
\text{if}\quad & M<+\infty, \quad \text{then}\quad  x_{M+1}=b;
\notag
\end{align}
\begin{align}
&\|u\|_{q,(a,x_{k+1}),\nu} \leq 2\|u\|_{q,(a,x_k+],\nu}
\quad \text{if}\quad N \leq k \leq M \label{eq.5.1.4};\\
2 & \|u\|_{q,(a,x_k),\nu} \leq \|u\|_{q,(a,x_{k+1}+],\nu} \quad \,\,
\, \text{if} \quad N < k < M. \label{eq.5.1.5}
\end{align}
{\bf Sufficiency.} Let \eqref{eq.3.1.1} and \eqref{eq.3.1.2} hold.
Since
\begin{equation}\label{eq.5.1.7}
\|gw\|_{p,(a,b),\mu} = \left\| \left\{\|gw\|_{p,J_k,\mu} \right\}
\right\|_{\ell^p(N,M)}, \quad \mbox{for any}\quad g \in B^+(I),
\end{equation}
and
\begin{equation}\label{eq.5.1.8}
\|gw\|_{p,J_k,\mu} \le \|g\|_{\i,J_k,\mu} \|w\|_{p,J_k,\mu}, \quad N
\le k \le M,
\end{equation}
on using \eqref{discrete.Hold.} and \eqref{eq.5.1.2}, we have that
\begin{align*}
\|gw\|_{p,(a,b),\mu} \le & \left\| \left\{\|g\|_{\i,J_k,\mu}
\|w\|_{p,J_k,\mu} \right\}
\right\|_{\ell^p(N,M)} \\
\le & \left\| \left\{\|w\|_{p,J_k,\mu}
\|u\|_{1,(a,x_k],\nu}^{-1}\right\}\right\|_{\ell^{\rho}(N,M)}
\,\left\| \left\{\|g\|_{\i,J_k,\mu}
\|u\|_{q,(a,x_k+],\nu}\right\}\right\|_{\ell^q(N,M)} \\
\ap &  \left\| \left\{\|w\|_{p,J_k,\mu}
\|u\|_{q,(a,x_k+],\nu}^{-1}\right\}\right\|_{\ell^{\rho}(N,M)}\,
\left\|u(x)\|g\|_{\i,(x,b),\mu}\right\|_{q,I,\nu}
\end{align*}
Consequently, $c \ls A$.

{\bf Necessity.} We now prove necessity. The validity of inequality
\eqref{eq.1.1.3} on $B^+(I)$ and \eqref{eq.5.1.2} imply that
\begin{equation}\label{eq.5.1.11}
\left\| \left\{\|gw\|_{p,J_k,\mu} \right\} \right\|_{\ell^p(N,M)}
\ls c \,\left\| \left\{\|g\|_{\i,J_k,\mu}
\|u\|_{q,(a,x_k+],\nu}\right\}\right\|_{\ell^q(N,M)}
\end{equation}
for all $g \in B^+(I)$.

Let $g_k \in B^+(I)$, $N \le k \le M$, be functions such that
\begin{equation}\label{eq.5.1.12}
\supp g_k \subset J_k, \quad \|g_k\|_{\i,J_k,\mu} = 1 \quad
\mbox{and} \quad \|gw\|_{p,J_k,\mu} \gs \|w\|_{p,J_k,\mu}.
\end{equation}
Then we define the test function $g$ by
\begin{equation}\label{eq.5.1.13}
g = \sum_{k=N}^M a_k g_k,
\end{equation}
where $\{a_k\}$ is a sequence of non-negative numbers. Consequently,
\eqref{eq.5.1.11} yields
\begin{equation}\label{eq.5.1.14}
\left\| \left\{a_k\|w\|_{p,J_k,\mu} \right\} \right\|_{\ell^p(N,M)}
\ls c \,\left\| \left\{a_k
\|u\|_{q,(a,x_k+],\nu}\right\}\right\|_{\ell^q(N,M)},
\end{equation}
and, by Proposition \ref{prop.2.1}, we arrive at
\begin{equation}\label{eq.5.1.15}
A = \left\| \left\{ \|w\|_{p,J_k,\mu}\,\|u\|_{q,(a,x_k+],\nu}^{-1}
\right\} \right\|_{\ell^{\rho}(N,M)} \ls c.
\end{equation}

On the other hand, assuming that $x_N > a$, testing \eqref{eq.1.1.3}
with $g = \chi_{(a,x_N]}$ and using \eqref{eq.5.1.3}, we arrive at
$\|w\|_{p,(a,x_N],\mu} = 0$, which implies \eqref{eq.3.1.2}.
\end{proof}

The following lemma is true.
\begin{lem}\label{lem.3.2.2}
Assume that $0 < q \le p \le +\i$. Let $\mu$ and $\nu$ be
non-negative Borel measures on $I = (a,b) \subseteq \R$. Let $w \in
B^+ (I)$ and let $u \in B^+(I)$ satisfy $\|u\|_{q,(a,t],\nu} < \i$
for all $t \in I$ and $u \neq 0$ a.e. on $(a,b)$. If
$\{x_k\}_{k=N}^{M+1}$ is a discretising sequence of $\vp (t) =
\|u\|_{q,(a,t+],\nu}$, $t \in I$, then
\begin{equation}\label{eq.3.9.1}
A = \left\| \left\{ \|w\|_{p,J_k,\mu}\,\|u\|_{q,(a,x_k+],\nu}^{-1}
\right\} \right\|_{\ell^{\i} (N,M)} < \i,
\end{equation}
and \eqref{eq.3.1.2} hold if and only if
\begin{equation}\label{eq.3.9.2}
A_1 : = \left\| \|w\|_{p,(a,x],\mu}
\|u\|_{q,(a,x),\nu}^{-1}\right\|_{\i,\I,\mu} < \i.
\end{equation}
Moreover, $A \ap A_1$.
\end{lem}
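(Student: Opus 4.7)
The plan is to prove $A_1\lesssim A$ and $A\lesssim A_1$ separately, the first exploiting that partial sums of an almost geometrically increasing sequence are controlled by the last term, and the second exploiting the bound $\|u\|_{q,(a,x_{k+1}),\nu}\le 2\|u\|_{q,(a,x_k+],\nu}$ supplied by Lemma~\ref{L:1.6}(ii).

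For $A_1\lesssim A$, using \eqref{eq.3.1.2} we may assume that $\mu$-a.e.\ $x\in(a,b)$ lies in some $J_k$. When $p<+\infty$ this yields
\begin{equation*}
\|w\|_{p,(a,x],\mu}^p\le\|w\|_{p,(a,x_{k+1}],\mu}^p=\sum_{j=N}^k\|w\|_{p,J_j,\mu}^p\le A^p\sum_{j=N}^k\|u\|_{q,(a,x_j+],\nu}^p.
\end{equation*}
By Remarks~\ref{R:2.1} and~\ref{lem.2.2.1} the sequence $\{\|u\|_{q,(a,x_j+],\nu}^p\}_j$ is almost geometrically increasing, so the partial sum is comparable to $\|u\|_{q,(a,x_k+],\nu}^p$; since $x>x_k$ implies $\|u\|_{q,(a,x_k+],\nu}\le\|u\|_{q,(a,x),\nu}$, we deduce $\|w\|_{p,(a,x],\mu}\lesssim A\,\|u\|_{q,(a,x),\nu}$ $\mu$-a.e., whence $A_1\lesssim A$. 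The case $p=+\infty$ is analogous with suprema replacing $p$-th power sums.

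For $A\lesssim A_1$, fix $k$ with $\|w\|_{p,J_k,\mu}>0$ and set $F(y):=\|w\|_{p,(a,y],\mu}$, $G(y):=\|u\|_{q,(a,y),\nu}$. Then $F$ is right-continuous and $G$ left-continuous, both non-decreasing, and by definition of $A_1$ we have $F(y)\le A_1\,G(y)$ for $\mu$-a.e.\ $y$. The identity $\varphi(x_{k+1}-)=\|u\|_{q,(a,x_{k+1}),\nu}$ (valid for all $q\in(0,+\infty]$, via monotone convergence combined with Remark~\ref{lem.2.3.1} when $q<+\infty$ and via comparison of nested suprema when $q=+\infty$), together with Lemma~\ref{L:1.6}(ii), gives
\begin{equation*}
G(y)\le\|u\|_{q,(a,x_{k+1}),\nu}=\varphi(x_{k+1}-)\le 2\,\varphi(x_k)=2\,\|u\|_{q,(a,x_k+],\nu}\quad\text{for every }y\le x_{k+1}.
\end{equation*}
It therefore suffices to exhibit $y^*\le x_{k+1}$ at which $F(y^*)\ge\|w\|_{p,J_k,\mu}$ and $F(y^*)\le A_1\,G(y^*)$. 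Take $y^*:=\sup(\supp\mu\cap(a,x_{k+1}])$. Since $\mu$ is null on $(y^*,x_{k+1}]$ and $F$ is right-continuous, $F$ is constant on $[y^*,x_{k+1}]$, so $F(y^*)=F(x_{k+1})\ge\|w\|_{p,J_k,\mu}$. The bound $F(y^*)\le A_1\,G(y^*)$ holds directly when $\mu(\{y^*\})>0$; when $\mu(\{y^*\})=0$ it is obtained by picking $y_n\to y^*-$ in the $\mu$-essential set (which exists because $y^*\in\supp\mu$) and passing to the limit using left-continuity of $G$ and of $F$ at $y^*$. Combining these inequalities and taking supremum over $k$ gives $A\lesssim A_1$.

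The main obstacle is the direction $A\lesssim A_1$: the $\mu$-essential supremum need not detect the boundary points $x_{k+1}$ at which the discretisation is sharp, so one must replace $x_{k+1}$ by the right-most $\mu$-support point $y^*\le x_{k+1}$ and handle the presence or absence of a $\mu$-atom at $y^*$ via a short limiting argument. This shift is crucial because the favourable upper bound on $G$ in terms of $\|u\|_{q,(a,x_k+],\nu}$ is available only on the closed interval $(a,x_{k+1}]$: moving to any $y>x_{k+1}$ would require controlling the right-limit $\varphi(x_{k+1})$ by $\varphi(x_k)$, which the definition of a discretising sequence does not guarantee.
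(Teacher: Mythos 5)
Your proof is correct and follows essentially the same route as the paper's: both directions rest on the discretization property $\|u\|_{q,(a,x_{k+1}),\nu}\le 2\|u\|_{q,(a,x_k+],\nu}$ and the almost geometric growth of $\{\|u\|_{q,(a,x_k+],\nu}\}$, and your derivation of $A_1\lesssim A$ (summing the block norms and absorbing the partial sum into the last term) is exactly the paper's necessity argument. In the direction $A\lesssim A_1$ you are in fact more careful than the paper, which simply writes $\sup_k\|w\|_{p,(a,x_{k+1}]\cap I,\mu}\,\|u\|_{q,(a,x_{k+1}),\nu}^{-1}\le A_1$ without comment; your $y^*$-argument supplies the missing justification that the $\mu$-essential supremum controls the values of your $F/G$ at the deterministic points $x_{k+1}$.

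One small repair is needed there. With $y^*=\sup(\supp\mu\cap(a,x_{k+1}])$ and $\mu(\{y^*\})=0$, the claim that a sequence $y_n\to y^*-$ in the $\mu$-essential set exists ``because $y^*\in\supp\mu$'' can fail: if $y^*=x_{k+1}$ and all the $\mu$-mass witnessing $y^*\in\supp\mu$ lies in $(x_{k+1},b)$ (for instance, $\mu$ having atoms only at the points $x_{k+1}+1/n$ and at one point of $J_k$ well to the left), then $\mu((y^*-\varepsilon,y^*))=0$ for some $\varepsilon>0$, and the exceptional $\mu$-null set where $F>A_1G$ may cover all of $(y^*-\varepsilon,y^*)$, so no admissible $y_n$ can be chosen. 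The fix is to take $y^*$ to be the $\mu$-essential supremum of $(a,x_{k+1}]\cap I$, that is, $y^*=\inf\{y:\ \mu((y,x_{k+1}]\cap I)=0\}$: one still has $\mu((y^*,x_{k+1}]\cap I)=0$, hence $F(y^*)=F(x_{k+1})\ge\|w\|_{p,J_k,\mu}$, and now either $\mu(\{y^*\})>0$ or, by minimality, $\mu((y^*-\varepsilon,y^*))>0$ for every $\varepsilon>0$, so your two cases go through verbatim. With this adjustment the proof is complete.
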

\begin{proof}
{\bf Sufficiency.} Assume that $A_1 <\i$. This condition and
\eqref{eq.5.1.3} imply that
\begin{equation}\label{eq.5.1.6}
\|w\|_{p,(a,x_N],\mu} = 0 \quad \mbox{if}\quad x_N > a.
\end{equation}
Consequently, \eqref{eq.3.1.2} holds.

Applying \eqref{eq.5.1.4}, we get that
\begin{align*}
A = \sup_{N \le k \le M} \|w\|_{p,J_k,\mu}
\|u\|_{q,(a,x_k+],\nu}^{-1}
\le &  \, 2 \sup_{N \le k \le M} \|w\|_{p,J_k,\mu} \|u\|_{q,(a,x_{k+1}),\nu}^{-1} \label{eq.5.1.10} \\
\le & \, 2 \sup_{N \le k \le M} \|w\|_{p,(a,x_{k+1}] \cap I,\mu}
\|u\|_{q,(a,x_{k+1}),\nu}^{-1} \notag \\
\le & 2 \, A_1. \notag
\end{align*}

{\bf Necessity.} Assume that \eqref{eq.3.9.1} and \eqref{eq.3.1.2}
hold. Therefore, on using \eqref{eq.2.2.1},
$$
A_1 = \sup_{N \le k\le M}  \left\| \|w\|_{p,(a,x],\mu}
\|u\|_{q,(a,x),\nu}^{-1}\right\|_{\i,J_k,\mu}
$$
and hence
\begin{align*}
A_1 \le & \sup_{N \le k\le M} \|w\|_{p,(a,x_{k+1}]\cap I,\mu}
\left\| \|u\|_{q,(a,x),\nu}^{-1}\right\|_{\i,J_k,\mu} \\
\le & \sup_{N \le k\le M} \|w\|_{p,(a,x_{k+1}]\cap I,\mu}
\|u\|_{q,(a,x_k+],\nu}^{-1}.
\end{align*}
Applying \eqref{eq.3.1.2} again, on using the fact that
$\{\|u\|_{q,(a,x_k+],\nu}^{-1}\}_{k=N}^M$ is almost geometrically
decreasing and Lemma \ref{L:1.2}, we obtain that
$$
A_1 \ls \sup_{N \le k \le M} \|w\|_{p,J_k,\mu}
\|u\|_{q,(a,x_k+],\nu}^{-1} = A.
$$
\end{proof}

To prove our main statement we need the following lemma.
\begin{lem}\label{lem.3.2.1}
Assume that $0 < p < q \le +\i$ and $1 / r = 1 / p - 1 / q$. Let
$\mu$ and $\nu$ be non-negative Borel measures on $I = (a,b)
\subseteq \R$. Let $w \in B^+ (I)$ and let $u \in B^+(I)$ satisfy
$\|u\|_{q,(a,t],\nu} < \i$ for all $t \in I$ and $u \neq 0$ a.e. on
$(a,b)$. If $\{x_k\}_{k=N}^{M+1}$ is a discretising sequence of $\vp
(t) = \|u\|_{q,(a,t+],\nu}$, $t \in I$, then
\begin{equation}\label{eq.3.2.1}
A = \left\| \left\{ \|w\|_{p,J_k,\mu}\,\|u\|_{q,(a,x_k+],\nu}^{-1}
\right\} \right\|_{\ell^r (N,M)} < \i,
\end{equation}
and \eqref{eq.3.1.2} hold if and only if
\begin{equation}\label{eq.3.2.2}
A_2 : = \left( \int_{(a,b)} \|w\|_{p,(a,x],\mu}^{r} d \left( -
\|u\|_{q,(a,x+],\nu}^{-r} \right) \right)^{1 / r} +
\|w\|_{p,(a,b),\mu} \|u\|_{q,(a,b),\nu}^{-1} < \i.
\end{equation}
Moreover, $A \ap A_2$.
\end{lem}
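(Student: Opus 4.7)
The plan is to introduce $V(x) = \|w\|_{p,(a,x],\mu}^r$, $\gamma_k = \vp(x_k)^{-1}$, $\alpha_k = \|w\|_{p,J_k,\mu}$, $W_k = \|w\|_{p,(a,x_k],\mu}^p$, and $\delta_k = \gamma_k^r - \gamma_{k+1}^r$, and to exploit that $V(x_k) = W_k^{r/p}$, $W_{k+1} - W_k = \alpha_k^p$, the sequence $\{\gamma_k^r\}_{k=N}^M$ is almost geometrically decreasing (Remark~\ref{lem.2.2.1}), $V(x_N) = 0$ (by \eqref{eq.3.1.2}), and $\gamma_M^r \approx \vp(b)^{-r}$ (by the AGI property combined with $\vp(b-) \le 2 \vp(x_M)$). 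I will discretize the integral in $A_2$ by writing $\int_{(a,b)} V \, d(-\vp^{-r}) = \sum_k \int_{J_k} V \, d(-\vp^{-r})$, rearrange $A^r$ by Abel summation, and match the two sides using a discrete Hardy-type inequality.

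For the direction $A_2 \lesssim A$, the monotonicity of $V$ on each $J_k$ together with $\delta_k \le \gamma_k^r$ gives $\int_{J_k} V \, d(-\vp^{-r}) \le V(x_{k+1}) \gamma_k^r$, and the boundary $V(b) \vp(b)^{-r}$ contributes a term $\approx V(x_{M+1}) \gamma_M^r$, yielding $A_2^r \lesssim \sum_k \big(\sum_{m \le k} \alpha_m^p\big)^{r/p} \gamma_k^r$. I will then apply a discrete weighted Hardy inequality with exponent $r/p \ge 1$; its Muckenhoupt-type condition $\sup_k \big(\sum_{m \ge k} \gamma_m^r\big)^{p/r} \big(\sum_{m \le k} \gamma_m^{-pr/(r-p)}\big)^{(r-p)/r} < \i$ holds uniformly, since the tail sum $\sum_{m \ge k} \gamma_m^r \approx \gamma_k^r$ (by AGD of $\{\gamma_m^r\}$) and the head sum $\sum_{m \le k} \gamma_m^{-pr/(r-p)} \approx \gamma_k^{-pr/(r-p)}$ (by AGI of $\{\gamma_m^{-pr/(r-p)}\}$), so the two factors cancel. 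This yields $\sum_k \big(\sum_{m \le k} \alpha_m^p\big)^{r/p} \gamma_k^r \lesssim \sum_k \alpha_k^r \gamma_k^r = A^r$.

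For the direction $A \lesssim A_2$, the superadditivity of $t \mapsto t^{r/p}$ (valid for $r/p \ge 1$) gives $\alpha_k^r = (W_{k+1} - W_k)^{r/p} \le W_{k+1}^{r/p} - W_k^{r/p} = \Delta V_k$, so $A^r \le \sum_k \Delta V_k \gamma_k^r$. Abel summation (using $V(x_N) = 0$) rewrites this as $V(b) \gamma_M^r + \sum_{k < M} V(x_{k+1}) \delta_k \approx V(b) \vp(b)^{-r} + \sum_k V(x_{k+1}) \delta_k$. Lebesgue-Stieltjes integration by parts identifies $A_2^r \approx \int_{(a,b)} \vp^{-r} \, dV$; discretizing over the $J_k$ and using the AGD of $\{\gamma_k^r\}$ with $L=2$ (which ensures $\gamma_k^r \lesssim \delta_k + \delta_{k+1}$) allows me to absorb the Abel-summed expression into $A_2^r$.

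The main obstacle is the necessity step: bridging the gap between the lower bound $V(x_k) \delta_k$ and the upper bound $V(x_{k+1}) \delta_k$ of $\int_{J_k} V \, d(-\vp^{-r})$ obtained from monotonicity of $V$. The AGD structure of $\{\gamma_k^r\}$ is exactly what closes this gap, since the difference $\Delta V_k \delta_k$ can be absorbed into $A^r + \int V \, d(-\vp^{-r})$ only by invoking the almost-geometric ratio provided by the discretizing sequence. Boundary effects—the case $x_N > a$ (handled via \eqref{eq.3.1.2}) and the case $M = +\i$ (where $\vp(b) = +\i$ and the boundary term vanishes)—are handled separately but routinely.
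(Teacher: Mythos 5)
Your necessity half (the direction $A_2\lesssim A$) is in substance the paper's own argument: the paper likewise splits the integral over the intervals $J_k$, bounds $\|w\|_{p,(a,x],\mu}$ on $J_k$ by its value at $x_{k+1}$, and collapses the resulting Hardy-type sum $\sum_k\big(\sum_{m\le k}\alpha_m^p\big)^{r/p}\gamma_k^r$ via Lemma~\ref{L:1.2}; your verification of the discrete Muckenhoupt condition (using that $\{\gamma_k^r\}$ is almost geometrically decreasing, so head and tail sums are comparable to their extreme terms) is a re-derivation of that lemma in this special case, so this half is fine.

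The gap is in the direction $A\lesssim A_2$, at the ``absorption'' step after Abel summation. There you must bound $S:=\sum_{k<M}V(x_{k+1})\delta_k$ by $A_2^r$, where $\delta_k=\varphi(x_k)^{-r}-\varphi(x_{k+1})^{-r}$ is the $(-d\varphi^{-r})$-mass of $J_k=(x_k,x_{k+1}]$. The inequality $\gamma_k^r\lesssim\delta_k+\delta_{k+1}$ that you cite does not close this: applied to $V(x_{k+1})\gamma_k^r$ it reproduces the very term $V(x_{k+1})\delta_k$ you are trying to control (circular), and the harmless piece $V(x_{k+1})\delta_{k+1}\le\int_{J_{k+1}}V\,d(-\varphi^{-r})$ cannot substitute for it, since $\delta_k$ may be of order $\gamma_k^r$, which can vastly exceed $\gamma_{k+1}^r\approx\delta_{k+1}+\delta_{k+2}$ when $\varphi$ rises sharply inside $J_k$. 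The dangerous configuration is a simultaneous jump of $V$ at $x_{k+1}$ (an atom of $\mu$) with the mass of $-d\varphi^{-r}$ on $J_k$ sitting strictly left of $x_{k+1}$: then $\int_{J_k}V\,d(-\varphi^{-r})$ only sees $V(x_{k+1}-)$, while $S$ carries $V(x_{k+1})\delta_k$. The paper's device, which you need here, is to pass to the left limits $\varphi(x-)=\|u\|_{q,(a,x),\nu}$: condition (ii) of Lemma~\ref{L:1.6} gives $\varphi(x_{k+1}-)\le2\varphi(x_k)$, hence $\gamma_k^r\le2^r\varphi(x_{k+1}-)^{-r}$, and condition (iii) applied twice gives $\varphi(x_{k+1}-)^{-r}\lesssim\varphi(x_{k+1}-)^{-r}-\varphi(x_{k+3}-)^{-r}=\int_{[x_{k+1},x_{k+3})}d(-\varphi^{-r})$; since the interval $[x_{k+1},x_{k+3})$ is closed on the left, the atom of $-d\varphi^{-r}$ at $x_{k+1}$ is captured with the full value $V(x_{k+1})$, and $V\ge V(x_{k+1})$ there, so $V(x_{k+1})\gamma_k^r\lesssim\int_{[x_{k+1},x_{k+3})}V\,d(-\varphi^{-r})$; summing with overlap two and adding the two boundary terms near $k=M$ gives $S\lesssim A_2^r$. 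Once you have this estimate, the superadditivity-plus-Abel-summation detour is unnecessary: the paper bounds $A^r=\sum_k\alpha_k^r\gamma_k^r$ directly by the same device, using only $\alpha_k\le\|w\|_{p,(a,t],\mu}$ for $t\ge x_{k+1}$, which is weaker than your superadditivity step and avoids creating $S$ in the first place.
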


\begin{proof}
Let $\{x_k\}_{k=N}^{M+1}$ be a discretising sequence of the function
$\vp (t) = \|u\|_{q,(a,t+],\nu}$, $t \in (a,b)$, and
$\{J_k\}_{k=N}^M$ is defined by \eqref{eq.2.2.1}. By Lemma
\ref{L:1.6} (cf. also Remark \ref{R:3.2.1}), \eqref{eq.5.1.3}-\eqref{eq.5.1.5} hold.

{\bf Sufficiency.} Assume that $A_2 < \i$. This condition,
\eqref{eq.5.1.3} and Convention \ref{conv.1} imply that
\eqref{eq.3.1.2} holds. By \eqref{eq.5.1.5},
$$
2 \|u\|_{q,(a,x_{k+1}),\nu} \le \|u\|_{q,(a,x_{k+2}+],\nu} \le
\|u\|_{q,(a,x_{k+3}),\nu} \qq \mbox{if}\qq N < k+1 < M.
$$
Therefore,
$$
\|u\|_{q,(a,x_{k+3}),\nu}^{-r} \le 2^{-r}\,
\|u\|_{q,(a,x_{k+1}),\nu}^{-r},
$$
which yields
$$
\|u\|_{q,(a,x_{k+1}),\nu}^{-r} - \|u\|_{q,(a,x_{k+3}),\nu}^{-r} \ge
(1 - 2^{-r}) \|u\|_{q,(a,x_{k+1}),\nu}^{-r} \qq \mbox{if}\qq N \le k
\le M-2.
$$
Assume that $N \le M-2$. On using \eqref{eq.5.1.4} and the last
estimate, we arrive at
\begin{align}
A^r \ls & \sum_{k = N}^M \|w\|_{p,J_k,\mu}^{r}
\|u\|_{q,(a,x_{k+1}),\nu}^{-r} \notag \\
\ls & \sum_{k = N}^{M-2} \|w\|_{p,J_k,\mu}^{r} \left(
\|u\|_{q,(a,x_{k+1}),\nu}^{-r} - \|u\|_{q,(a,x_{k+3}),\nu}^{-r}
\right) \notag \\
&  +  \|w\|_{p,J_{M-1},\mu}^{r} \left(
\|u\|_{q,(a,x_M),\nu}^{-r} -
\|u\|_{q,(a,b),\nu}^{-r} \right) \notag \\
&  + \|w\|_{p,J_{M-1},\mu}^{r}
\|u\|_{q,(a,b),\nu}^{-r} + \|w\|_{p,J_M,\mu}^{r}
\|u\|_{q,(a,b),\nu}^{-r}. \label{eq.8.1.4}
\end{align}
Now, by \eqref{eqq--} with $\vp (t) = - \|u\|_{q,(a,t+],\nu}^{-r}$,
$t \in I$, and $[\a,\b) = [x_{k+1},x_{k+3})$, $N \le k \le M-2$, or
$[\a,\b) = [x_M,b)$, we obtain that
\begin{align*}
A^r \ls & \sum_{k = N}^{M-2} \|w\|_{p,J_k,\mu}^{r}
\int_{[x_{k+1},x_{k+3})} d\left( - \|u\|_{q,(a,t+],\nu}^{-r} \right) \\
&  +  \|w\|_{p,J_{M-1},\mu}^{r} \int_{[x_M,b)}
d\left( - \|u\|_{q,(a,t+],\nu}^{-r} \right) + 2
\|w\|_{p,(a,b),\mu}^{r} \|u\|_{q,(a,b),\nu}^{-r} \\
\ls & \sum_{k = N}^{M-2}
\int_{[x_{k+1},x_{k+3})} \|w\|_{p,(a,t],\mu}^{r}\,d\left( - \|u\|_{q,(a,t+],\nu}^{-r} \right) \\
& +  \int_{[x_M,b)} \|w\|_{p,(a,t],\mu}^{r}
\,d\left( - \|u\|_{q,(a,t+],\nu}^{-r} \right) + 2
\|w\|_{p,(a,b),\mu}^{r} \|u\|_{q,(a,b),\nu}^{-r} \\
\le & \int_{(a,b)} \|w\|_{p,(a,t],\mu}^{r} \,d\left( -
\|u\|_{q,(a,t+],\nu}^{-r} \right) + 2
\|w\|_{p,(a,b),\mu}^{r} \|u\|_{q,(a,b),\nu}^{-r} \\
\ls & ~A_2^{r}
\end{align*}
(note that we have used \eqref{eq.3.1.2} and Convention
\ref{conv.1}), that is,
\begin{equation}\label{eq.8.1.5}
A \ls A_2.
\end{equation}
If $N > M-2$, then \eqref{eq.8.1.5} can be proved more simply and we
omit the proof.

{\bf Necessity.} Now assume that $A < \i$ and \eqref{eq.3.1.2}
holds. On using \eqref{eq.3.1.2}, together with \eqref{eqq++} and
\eqref{eqq+-}, we have that
\begin{align}
A_2^r \ap & \sum_{k=N}^M \int_{J_k} \|w\|_{p,(a,x],\mu}^r d \left( -
\|u\|_{q,(a,x+],\nu}^{-r} \right) + \|w\|_{p,(a,b),\mu}^r
\|u\|_{q,(a,b),\nu}^{-r} \notag
\\
\le & \sum_{k=N}^{M-1} \|w\|_{p,(a,x_{k+1}],\mu}^r \int_{J_k} d
\left( - \|u\|_{q,(a,x+],\nu}^{-r} \right) \notag \\
& + \|w\|_{p,(a,b),\mu}^r \int_{(x_M,b)} d \left( -
\|u\|_{q,(a,x+],\nu}^{-r} \right) + \|w\|_{p,(a,b),\mu}^r
\|u\|_{q,(a,b),\nu}^{-r} \notag \\
= & \sum_{k=N}^{M-1} \|w\|_{p,(a,x_{k+1}],\mu}^r \left(
\|u\|_{q,(a,x_k+],\nu}^{-r} - \|u\|_{q,(a,x_{k+1}+],\nu}^{-r}\right)
\notag \\
& + \|w\|_{p,(a,b),\mu}^r \left( \|u\|_{q,(a,x_M+],\nu}^{-r} -
\|u\|_{q,(a,b),\nu}^{-r}\right) + \|w\|_{p,(a,b),\mu}^r
\|u\|_{q,(a,b),\nu}^{-r} \notag \\
\ls & \sum_{k=N}^{M-1} \|w\|_{p,(a,x_{k+1}],\mu}^r
\|u\|_{q,(a,x_k+],\nu}^{-r}  + \|w\|_{p,(a,b),\mu}^r
\|u\|_{q,(a,x_M+],\nu}^{-r}. \label{eq.8.1.6}
\end{align}
Thus, using \eqref{eq.3.1.2} again, we arrive at
\begin{align*}
A_2^r \ls & \sum_{k=N}^M \|w\|_{p,(a,x_{k+1}]\cap I,\mu}^r
\|u\|_{q,(a,x_k+],\nu}^{-r} \\
= & \sum_{k=N}^M \left( \sum_{i = N}^ k \|w\|_{p,J_i,\mu}^r
\right)\|u\|_{q,(a,x_k+],\nu}^{-r} \\
\end{align*}
Now, the fact that $\{\|u\|_{q,(a,x_k+],\nu}^{-r}\}_{k=N}^M$ is
almost geometrically decreasing and Lemma \ref{L:1.2} imply that
\begin{equation}\label{eq.8.1.7}
A_2 \ls  \left( \sum_{k=N}^M \|w\|_{p,J_k,\mu}^r
\|u\|_{q,(a,x_k+],\nu}^{-r} \right)^{1/r} = A.
\end{equation}
Combining \eqref{eq.8.1.5} and \eqref{eq.8.1.7}, we get $A \ap A_2$.
\end{proof}

Now we are in position to prove our first main result.
\begin{thm}\label{thm.1.1.1}
Assume that $0 < p,\,q \le +\i$. Let $\mu$ and $\nu$ be non-negative
Borel measures on $I = (a,b) \subseteq \R$. Let $w \in B^+ (I)$ and
let $u \in B^+(I)$ satisfy $\|u\|_{q,(a,t],\nu} < \i$ for all $t \in
I$ and $u \neq 0$ a.e. on $(a,b)$.

{\rm (i)} Let $0 < q \le p \le +\i$. Then inequality
\eqref{eq.1.1.3} holds for all $g \in B^+ (I)$ if and only if
\begin{equation}\label{eq.1.2.1}
A_1 = \left\| \|w\|_{p,(a,x],\mu}
\|u\|_{q,(a,x),\nu}^{-1}\right\|_{\i,\I,\mu} < \i.
\end{equation}
The best possible constant $c$ in \eqref{eq.1.1.3} satisfies $c \ap
A_1$.

{\rm (ii)} Let $0 < p < q < +\i$ and $1 / r = 1 / p - 1 / q$. Then
inequality \eqref{eq.1.1.3} holds for all $g \in B^+ (I)$ if and
only if
\begin{equation}\label{eq.1.2.2}
A_2 = \left( \int_{(a,b)} \|w\|_{p,(a,x],\mu}^{r} d \left( -
\|u\|_{q,(a,x],\nu}^{-r} \right) \right)^{1 / r} +
\|w\|_{p,(a,b),\mu} \|u\|_{q,(a,b),\nu}^{-1} < \i.
\end{equation}
The best possible constant $c$ in \eqref{eq.1.1.3} satisfies $c \ap
A_2$.

{\rm (iii)} Let $0 < p < +\i$, $q = +\i$. Then inequality
\eqref{eq.1.1.3} holds for all $g \in B^+ (I)$ if and only if
\begin{align}
A_3 & = \left( \int_{(a,b)} \left(
\frac{w(x)}{\|u\|_{\i,(a,x),\nu}}\right)^p\,d\mu(x) \right)^{1 / p}
\label{eq.1.2.3} \\
& \ap \left( \int_{(a,b)} \|w\|_{p,(a,x],\mu}^{p} d \left( -
\|u\|_{\i,(a,x+],\nu}^{-p} \right) \right)^{1 / p} +
\|w\|_{p,(a,b),\mu} \|u\|_{\i,(a,b),\nu}^{-1} < \i. \notag
\end{align}
The best possible constant $c$ in \eqref{eq.1.1.3} satisfies $c \ap
A_3$.
\end{thm}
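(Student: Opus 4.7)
The strategy is to chain Lemma~\ref{lem.3.1.1} with Lemma~\ref{lem.3.2.2} or Lemma~\ref{lem.3.2.1}, so that the discretisation introduced in the former is removed by one of the latter. Fix a discretising sequence $\{x_k\}_{k=N}^{M+1}$ of $\vp(t)=\|u\|_{q,(a,t+],\nu}$. Lemma~\ref{lem.3.1.1} asserts that \eqref{eq.1.1.3} is equivalent to $A<\i$ together with \eqref{eq.3.1.2}, where $1/\rho=(1/p-1/q)_+$, and that the best constant satisfies $c\ap A$. The three cases correspond to the three forms of $\rho$: $\rho=\i$ when $q\le p$, $\rho=r\in(0,\i)$ when $p<q<\i$, and $\rho=p=r$ when $q=\i$.

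For part~(i), $\rho=\i$, and Lemma~\ref{lem.3.2.2} immediately yields the equivalence with $A_1<\i$, so $c\ap A\ap A_1$. For part~(ii), $\rho=r\in(0,\i)$, and Lemma~\ref{lem.3.2.1} gives $A\ap\widetilde{A}_{2}$, where $\widetilde{A}_{2}$ differs from $A_{2}$ of \eqref{eq.1.2.2} only in having $\|u\|_{q,(a,x+],\nu}$ in place of $\|u\|_{q,(a,x],\nu}$. Since $q<\i$ here, Remark~\ref{lem.2.3.1} ensures that the two functions of $x$ agree pointwise, so the Borel measures they generate via \eqref{eq.alphabeta+} coincide and $\widetilde{A}_{2}=A_{2}$.

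Part~(iii) requires one extra computation. With $q=\i$ we have $\rho=p=r$, so Lemma~\ref{lem.3.2.1} remains applicable and produces precisely the second expression appearing in \eqref{eq.1.2.3}; call it $\widetilde{A}_{3}$. It remains to show $\widetilde{A}_{3}\ap A_{3}$. Setting $\psi(x):=\|u\|_{\i,(a,x+],\nu}^{-p}$, which is non-increasing and right-continuous in $x$, I apply Fubini--Tonelli to obtain
\[
\int_{(a,b)}\|w\|_{p,(a,x],\mu}^{p}\,d(-\psi(x))=\int_{(a,b)}w(t)^{p}\int_{[t,b)}d(-\psi(x))\,d\mu(t).
\]
By \eqref{eqq--} applied to the non-decreasing function $-\psi$, the inner integral equals $\psi(t-)-\psi(b-)=\|u\|_{\i,(a,t),\nu}^{-p}-\|u\|_{\i,(a,b),\nu}^{-p}$. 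The boundary term $\|w\|_{p,(a,b),\mu}^{p}\|u\|_{\i,(a,b),\nu}^{-p}$ hidden in $\widetilde{A}_{3}^{p}$ then cancels the $-\psi(b-)$ contribution, leaving $A_{3}^{p}=\int_{(a,b)}w(t)^{p}\|u\|_{\i,(a,t),\nu}^{-p}\,d\mu(t)$. The elementary equivalence $(x^{p}+y^{p})^{1/p}\ap x+y$ converts this sum form back to a $p$-th-root sum, giving $\widetilde{A}_{3}\ap A_{3}$ and hence $c\ap A_{3}$.

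The main obstacle lies in this Fubini step for part~(iii). Since $q=\i$, Remark~\ref{lem.2.3.1} is unavailable, and $\|u\|_{\i,(a,x],\nu}$ may differ from $\|u\|_{\i,(a,x+],\nu}$ at atoms of $\nu$, forcing careful identification of the one-sided limit $\psi(t-)$ and distinction between the two forms of $A_3$. One must also invoke Convention~\ref{conv.1} together with \eqref{eq.3.1.2} to handle the degenerate case $\|u\|_{\i,(a,c),\nu}=0$ on some initial interval $(a,c)$, so that both $A_{3}$ and $\widetilde{A}_{3}$ refer only to the effective sub-interval on which $\|u\|_{\i,(a,x),\nu}>0$.
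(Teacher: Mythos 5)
Your proposal is correct and follows essentially the same route as the paper: Theorem~\ref{thm.1.1.1} is obtained by chaining the discretization Lemma~\ref{lem.3.1.1} with the anti-discretization Lemmas~\ref{lem.3.2.2} (case $q\le p$) and \ref{lem.3.2.1} (case $p<q$), using Remark~\ref{lem.2.3.1} to pass from $\|u\|_{q,(a,x+],\nu}$ to $\|u\|_{q,(a,x],\nu}$ when $q<\i$, and an integration by parts (Fubini--Tonelli) for $q=+\i$. Your explicit computation identifying $\psi(t-)=\|u\|_{\i,(a,t),\nu}^{-p}$ and cancelling the boundary term correctly supplies the ``integration by parts formula'' that the paper only cites in its one-line proof of part (iii).
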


\begin{proof}
(i) Let $0 < q \le p \le +\i$. The statement follows by Lemmas
\ref{lem.3.1.1} and \ref{lem.3.2.2}.

(ii) Let $0 < p < q < +\i$. The
statement follows by Lemmas \ref{lem.3.1.1} and \ref{lem.3.2.1}.

{\rm (iii)} Let $0 < p < q = +\i$. The statement follows by Lemmas
\ref{lem.3.1.1}, \ref{lem.3.2.1} and an integration by parts
formula.
\end{proof}

\begin{rem}
Let $q < +\i$. Since
$$
\|u\|_{q,(a,x+],\nu} = \|u\|_{q,(a,x],\nu} \qq\mbox{for all}\qq x
\in I,
$$
the cases (ii) and (iii) can be combined:

{$\rm (ii)^{\prime}$} Let $0 < p < q \le +\i$ and $1 / r = 1 / p - 1
/ q$. Then inequality \eqref{eq.1.1.3} holds for all $g \in B^+ (I)$
if and only if
\begin{equation*}
A_2^{\prime} : = \left( \int_{(a,b)} \|w\|_{p,(a,x],\mu}^{r} d
\left( - \|u\|_{q,(a,x+],\nu}^{-r} \right) \right)^{1 / r} +
\|w\|_{p,(a,b),\mu} \|u\|_{q,(a,b),\nu}^{-1} < \i.
\end{equation*}
The best possible constant $c$ in \eqref{eq.1.1.3} satisfies $c \ap
A_2^{\prime}$.
\end{rem}

\begin{rem}
Note that inequality
\eqref{eq.1.1.3} can be easily characterized by a more simply argumentations when $q = +\i$. Exchanging essential suprema, we have that
\begin{equation}\label{eq.8.1.1}
\|u(x)\|g\|_{\i,(x,b),\mu}\|_{\i,(a,b),\nu} =
\|\|u\|_{\i,(a,x),\nu}g(x)\|_{\i,(a,b),\mu},\qquad g \in B^+(I).
\end{equation}
Consequently, \eqref{eq.1.1.3} is nothing else the description of
the embeddings of weighted $L^{\i}(\mu)$ to weighted $L^p(\nu)$
(see, for instance, \cite[Proposition 6.13]{F}). Indeed: on using \eqref{eq.8.1.1}, for the
best constant $c$ in \eqref{eq.1.1.3} we have that
\begin{align*}
c = & \sup_{g \not\sim 0}
\frac{\|gw\|_{p,(a,b),\mu}}{\|u(x)\|g\|_{\i,(x,b),\mu}\|_{\i,(a,b),\nu}}
\\
= & \sup_{g \not\sim 0}
\frac{\|gw\|_{p,(a,b),\mu}}{\|\|u\|_{\i,(a,x),\nu}g(x)\|_{\i,(a,b),\mu}}
\\
= & \left( \int_{(a,b)} \left(
\frac{w(x)}{\|u\|_{\i,(a,x),\nu}}\right)^p\,d\mu(x) \right)^{1 / p},
\end{align*}
when $0 < p < +\i$, and
\begin{align*}
c = & \left\| w(x) \|u\|_{\i,(a,x),\nu}^{-1} \right\|_{\i,\I,\mu}
\\
= &  \left\| \|w\|_{\i,(a,x],\mu}
\|u\|_{\i,(a,x),\nu}^{-1}\right\|_{\i,\I,\mu},
\end{align*}
when $p = +\i$. In the last equality, exchanging essential suprema,
we have used that
\begin{align*}
\left\| w(x) \|u\|_{\i,(a,x),\nu}^{-1} \right\|_{\i,\I,\mu} & =
\left\| w(x) \left\|\|u\|_{\i,(a,t),\nu}^{-1}\right\|_{\i,[x,b),\mu}
\right\|_{\i,\I,\mu} \\
& = \left\|
\left\|w(x)\|u\|_{\i,(a,t),\nu}^{-1}\right\|_{\i,[x,b),\mu}
\right\|_{\i,\I,\mu} \\
& = \left\|
\left\|w(x)\chi_{[x,b)}(t)\|u\|_{\i,(a,t),\nu}^{-1}\right\|_{\i,\I,\mu}
\right\|_{\i,\I,\mu} \\
& = \left\| \left\|w(x)\chi_{[x,b)}(t)\right\|_{\i,\I,\mu}
\|u\|_{\i,(a,t),\nu}^{-1}
\right\|_{\i,\I,\mu} \\
& = \left\| \|w\|_{\i,(a,t],\mu} \|u\|_{\i,(a,t),\nu}^{-1}
\right\|_{\i,\I,\mu}.
\end{align*}
(see, \cite[Remark 4.2]{EvGogOp}).
\end{rem}

\section{Reverse inequality for the dual operator}\label{s.4}

The aim of this section is to characterize the inequality
\eqref{eq.1.1.4}.
\begin{thm}\label{thm.1.2.1}
Assume that $0 < p,\,q \le +\i$. Let $\mu$ and $\nu$ be non-negative
Borel measures on $I = (a,b) \subseteq \R$. Let $w \in B^+ (I)$ and
let $u \in B^+(I)$ satisfy $\|u\|_{q,[t,b),\nu} < \i$ for all $t \in
I$ and $u \neq 0$ a.e. on $(a,b)$.

{\rm (i)} Let $0 < q \le p \le +\i$. Then inequality
\eqref{eq.1.1.4} holds for all $g \in B^+ (I)$ if and only if
\begin{equation}\label{eq.1.1}
B_1 : = \left\| \|w\|_{p,[x,b),\mu}
\|u\|_{q,(x,b),\nu}^{-1}\right\|_{\i,\I,\mu} < \i.
\end{equation}
The best possible constant $c$ in \eqref{eq.1.1.4} satisfies $c \ap
B_1$.

{\rm (ii)} Let $0 < p < q < +\i$ and $1 / r = 1 / p - 1 / q$. Then
inequality \eqref{eq.1.1.4} holds for all $g \in B^+ (I)$ if and
only if
\begin{equation}\label{eq.1.2}
B_2 : = \left( \int_{(a,b)} \|w\|_{p,[x,b),\mu}^{r} d \left(
\|u\|_{q,[x,b),\nu}^{-r} \right) \right)^{1 / r} +
\|w\|_{p,(a,b),\mu} \|u\|_{q,(a,b),\nu}^{-1} < \i.
\end{equation}
The best possible constant $c$ in \eqref{eq.1.1.4} satisfies $c \ap
B_2$.

{\rm (iii)} Let $0 < p < +\i$, $q = +\i$. Then inequality
\eqref{eq.1.1.4} holds for all $g \in B^+ (I)$ if and only if
\begin{align}
B_3 : & = \left( \int_{(a,b)} \left(
\frac{w(x)}{\|u\|_{\i,(x,b),\nu}}\right)^p\,d\mu(x) \right)^{1 / p}
\label{eq.1.3} \\
& \ap \left( \int_{(a,b)} \|w\|_{p,[x,b),\mu}^{p} d \left(
\|u\|_{\i,[x-,b),\nu}^{-p} \right) \right)^{1 / p} +
\|w\|_{p,(a,b),\mu} \|u\|_{\i,(a,b),\nu}^{-1} < \i, \notag
\end{align}
where
$$
\|u\|_{\i,[x-,b),\nu} : = \lim_{s\rw t-} \|u\|_{\i,[s,b),\nu}.
$$
The best possible constant $c$ in \eqref{eq.1.1.4} satisfies $c \ap
B_3$.
\end{thm}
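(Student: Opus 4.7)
The natural strategy is to reduce Theorem \ref{thm.1.2.1} to Theorem \ref{thm.1.1.1} via an orientation-reversing change of variables, so that the machinery already built for the forward operator (the discretization via $\vp(t) = \|u\|_{q,(a,t+],\nu}$ and Lemmas \ref{lem.3.1.1}, \ref{lem.3.2.2}, \ref{lem.3.2.1}) can be re-used directly.

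\textbf{Step 1 (reflection).} Fix any strictly decreasing homeomorphism $\phi\colon(a,b)\to(a,b)$: when $(a,b)$ is bounded take $\phi(x)=a+b-x$; when $(a,b)=\R$ take $\phi(x)=-x$; in general, conjugate a linear reflection by any increasing homeomorphism $(a,b)\to(0,1)$. Define the push-forward measures $\tilde\mu:=\phi_{*}\mu$, $\tilde\nu:=\phi_{*}\nu$ and the transported data $\tilde g:=g\circ\phi^{-1}$, $\tilde w:=w\circ\phi^{-1}$, $\tilde u:=u\circ\phi^{-1}$. Because $\phi$ is a decreasing homeomorphism, subintervals flip as $\phi((a,x])=[\phi(x),b)$, $\phi((a,x))=(\phi(x),b)$, $\phi([x,b))=(a,\phi(x)]$, and so on.

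\textbf{Step 2 (equivalence of the inequalities).} A routine change of variables together with these interval identities gives
\begin{align*}
\|gw\|_{p,(a,b),\mu} &= \|\tilde g\tilde w\|_{p,(a,b),\tilde\mu},\\
\|g\|_{\i,(a,x),\mu} &= \|\tilde g\|_{\i,(\phi(x),b),\tilde\mu},\\
\bigl\|u(x)\|g\|_{\i,(a,x),\mu}\bigr\|_{q,(a,b),\nu} &= \bigl\|\tilde u(y)\|\tilde g\|_{\i,(y,b),\tilde\mu}\bigr\|_{q,(a,b),\tilde\nu},
\end{align*}
the last line obtained by the substitution $y=\phi(x)$. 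Hence \eqref{eq.1.1.4} for $(g,w,u,\mu,\nu)$ is equivalent, with the same best constant, to \eqref{eq.1.1.3} for $(\tilde g,\tilde w,\tilde u,\tilde\mu,\tilde\nu)$, and the standing hypothesis $\|u\|_{q,[t,b),\nu}<\i$ becomes $\|\tilde u\|_{q,(a,\phi(t)],\tilde\nu}<\i$, so Theorem \ref{thm.1.1.1} is applicable to the tilded system.

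\textbf{Step 3 (translation of the conditions).} Applying Theorem \ref{thm.1.1.1} to $(\tilde g,\tilde w,\tilde u,\tilde\mu,\tilde\nu)$ yields the three conditions $A_1$, $A_2$, $A_3$ built from $\|\tilde w\|_{p,(a,y],\tilde\mu}$ and $\|\tilde u\|_{q,(a,y),\tilde\nu}$. Setting $y=\phi(x)$ converts these into $\|w\|_{p,[x,b),\mu}$ and $\|u\|_{q,(x,b),\nu}$, respectively. The non-decreasing right-continuous integrator $-\|\tilde u\|_{q,(a,y+],\tilde\nu}^{-r}$ in the $y$-variable pushes forward under $\phi$ to the non-decreasing left-continuous function $\|u\|_{q,[x,b),\nu}^{-r}$ of $x$, and the Lebesgue-Stieltjes integral is identified accordingly (Convention \ref{conv.1} on the tilded side is replaced by Convention \ref{conv.2} on the original side). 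Case (i) transcribes directly to $B_1$; case (ii) yields $B_2$; and in case (iii) the identity $A_3\ap \bigl(\int w^p/\|u\|_{\i,(a,x),\nu}^p\,d\mu\bigr)^{1/p}$ translates into the first line of $B_3$, while the integration-by-parts reformulation in the second line of $B_3$ is obtained exactly as in Theorem \ref{thm.1.1.1}(iii), with the left-limit $\|u\|_{\i,[x-,b),\nu}$ replacing the right-limit $\|u\|_{\i,(a,x+],\nu}$.

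\textbf{Main obstacle.} The substantive part is Step 3: one must verify, with careful bookkeeping of one-sided continuity and open/half-open/closed endpoints, that the push-forward of the integrator $d(-\|\tilde u\|_{q,(a,y+],\tilde\nu}^{-r})$ under the decreasing map $\phi$ really is the integrator $d(\|u\|_{q,[x,b),\nu}^{-r})$ as in \eqref{eq.1.2}, and that the boundary term $\|w\|_{p,(a,b),\mu}\|u\|_{q,(a,b),\nu}^{-1}$ is preserved. If one prefers a self-contained proof, the same theorem can be obtained by repeating the proof of Section~\ref{s.3} verbatim, but with the discretizing sequence of $\vp(t) = \|u\|_{q,(a,t+],\nu}$ replaced by that of the non-increasing left-continuous function $\psi(t) := \|u\|_{q,[t-,b),\nu}$; this gives mirrored analogues of Lemmas \ref{lem.3.1.1}, \ref{lem.3.2.2}, \ref{lem.3.2.1} and then Theorem \ref{thm.1.2.1} in complete parallel, at the cost of the same endpoint bookkeeping.
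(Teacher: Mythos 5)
Your proposal is correct and follows essentially the same route as the paper: the paper also reduces \eqref{eq.1.1.4} to \eqref{eq.1.1.3} by an orientation-reversing reflection (there, $x\mapsto -x$ onto $(-b,-a)$ with push-forward measures $\tilde\mu$, $\tilde\nu$), applies Theorem \ref{thm.1.1.1} to the tilded system, and then translates the conditions back. The step you flag as the main obstacle --- verifying that the right-continuous integrator $d\bigl(-\|\tilde u\|_{q,(-b,y],\tilde\nu}^{-r}\bigr)$ pushes forward to the left-continuous integrator $d\bigl(\|u\|_{q,[x,b),\nu}^{-r}\bigr)$ --- is exactly the computation the paper carries out explicitly via $\la([\a,\b))=\|u\|_{q,[\b,b),\nu}^{-r}-\|u\|_{q,[\a,b),\nu}^{-r}$.
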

\begin{rem}
Note that the proof of Theorem \ref{thm.1.2.1} is similar to the
proof of Theorems 5.1 and 5.4 from \cite{EvGogOp}. For the sake of
completeness we give complete proof here.
\end{rem}

\begin{proof}
If $\la$ is a non-negative Borel measure on $I$, we denote by
$\tilde{\la}$ a non-negative Borel measure on $\tilde{I} : =
(-b,-a)$ defined by
$$
\tilde{\la} : = \la (-E), \qq\mbox{where}\qq -E : = \{-x:~x\in E\}.
$$
Similarly, if $h \in B^+(I)$, then the function $\tilde{h} \in
B^+(\tilde{I})$ is given by
$$
\tilde{h} : = h (-x),\qq x \in \tilde{I}.
$$
It is clear that
\begin{equation}\label{eq.10.2.1}
\int_E h \,d\la = \int_{-E} \tilde{h}\,d\tilde{\la},
\end{equation}
and
\begin{equation}\label{eq.10.2.2}
\|h\|_{\i,E,\la} = \|\tilde{h}\|_{\i,-E,\tilde{\la}},
\end{equation}
for any Borel subset $E$ of $I$. In particular,
\begin{align*}
\|gw\|_{p,(a,b),\mu} & =
\|\tilde{g}\tilde{w}\|_{p,(-b,-a),\tilde{\mu}},  \\
\|u(x)\|g\|_{\i,(a,x),\mu}\|_{q,(a,b),\nu} & =
\|\tilde{u}(x)\|g\|_{\i,(a,-x),\mu}\|_{q,(-b,-a),\tilde{\nu}}, \\
& =
\|\tilde{u}(x)\|\tilde{g}\|_{\i,(x,-a),\tilde{\mu}}\|_{q,(-b,-a),\tilde{\nu}}.
\end{align*}
Consequently, inequality \eqref{eq.1.1.4} holds for all $g \in
B^+(I)$ if and only if the inequality
\begin{equation}\label{eq.10.1.1}
\|\tilde{g}\tilde{w}\|_{p,(-b,-a),\tilde{\mu}} \le c\,
\|\tilde{u}(x)\|\tilde{g}\|_{\i,(x,-a),\tilde{\mu}}\|_{q,(-b,-a),\tilde{\nu}}
\end{equation}
holds for all $\tilde{g} \in B^+(\tilde{I})$.

{\rm (i)} Let $0 < q \le p \le +\i$. Since
$\|\tilde{u}\|_{q,(-b,x],\tilde{\nu}} = \|u\|_{q,[-x,b),\nu}$ if $x
\in (-b,-a)$, we deduce from Theorem \ref{thm.1.1.1} that inequality
\eqref{eq.1.1.4} holds on $B^+ (I)$ if and only if
\begin{equation}\label{eq.10.1.2}
\sup_{x \in (-b,-a)} \|\tilde{w}\|_{p,(-b,x],\tilde{\mu}} \,
\|\tilde{u}\|_{q,(-b,x],\tilde{\nu}}^{-1} < \i.
\end{equation}
However, using \eqref{eq.10.2.1} and \eqref{eq.10.2.2}, we see that
condition \eqref{eq.10.1.2} coincides with \eqref{eq.1.1}.

{\rm (ii)} Let $0 < p < q < +\i$ and $1 / r = 1 / p - 1 / q$. By
Theorem \ref{thm.1.1.1}, inequality \eqref{eq.1.1.4} holds for all
$g \in B^+(I)$ if and only if
\begin{align}
A_2 : = & \left( \int_{(-b,-a)}
\|\tilde{w}\|_{p,(-b,x],\tilde{\mu}}^{r} \, d \left( -
\|\tilde{u}\|_{q,(-b,x],\tilde{\nu}}^{-r} \right) \right)^{1 / r}
\notag \\
& + \|\tilde{w}\|_{p,(-b,-a),\tilde{\mu}}
\|\tilde{u}\|_{q,(-b,-a),\tilde{\nu}}^{-1} < \i. \label{eq.10.1.3}
\end{align}
It is clear that
\begin{equation}\label{eq.10.1.4}
\|\tilde{w}\|_{p,(-b,-a),\tilde{\mu}} = \|w\|_{p,(a,b),\mu}
\qq\mbox{and}\qq \|\tilde{u}\|_{q,(-b,-a),\tilde{\nu}} =
\|u\|_{q,(a,b),\nu}.
\end{equation}
Moreover, by the definition of the Lebesgue-Stieltjes integral,
\begin{equation}\label{eq.10.1.5}
\int_{(-b,-a)} \|\tilde{w}\|_{p,(-b,x],\tilde{\mu}}^{r} d \left( -
\|\tilde{u}\|_{q,(-b,x],\tilde{\nu}}^{-r} \right) = \int_{(-b,-a)}
\|\tilde{w}\|_{p,(-b,x],\tilde{\mu}}^{r} d \tilde{\la} = \,: \,D,
\end{equation}
where $\tilde{\la}$ is the non-negative Borel measure associated to
the non-decreasing and right-continuous function $\tilde{\vp}(x) : =
- \|\tilde{u}\|_{q,(-b,x],\tilde{\nu}}^{-r}$, $x \in (-b,-a)$, that
is,
$$
\tilde{\la} ((\tilde{\a},\tilde{\b}]) = \tilde{\vp} (\tilde{\b}) -
\tilde{\vp} (\tilde{\a}) \qq \mbox{for any}\qq
(\tilde{\a},\tilde{\b}] \subset (-b,-a).
$$
Since, by \eqref{eq.10.2.1},
$$
\|\tilde{w}\|_{p,(-b,x],\tilde{\mu}}^{r} = \|w\|_{p,[x,b),\mu}^r
\qq\mbox{for all}\qq t \in (-b,-a),
$$
we obtain from \eqref{eq.10.2.1} that
\begin{equation}\label{eq.10.1.6}
D = \int_{(-b,-a)} \|\tilde{w}\|_{p,(-b,x],\tilde{\mu}}^{r} d
\tilde{\la} = \int_{(a,b)} \|w\|_{p,[x,b),\mu}^r\,d\la,
\end{equation}
where $\la (E) = \tilde{\la}(-E)$ if $E$ is a Borel subset of $I$.
In particular, if $[\a,\b) \subset (a,b)$, then
\begin{align*}
\la ([\a,\b)) & = \tilde{\la}((-\b,-\a]) = \tilde{\vp} (-\a) -
\tilde{\vp} (-\b) \\
& = - \|\tilde{u}\|_{q,(-b,-\a],\tilde{\nu}}^{-r} +
\|\tilde{u}\|_{q,(-b,-\b],\tilde{\nu}}^{-r} \\
& = - \|u\|_{q,[\a,b),\nu}^{-r} + \|u\|_{q,[\b,b),\nu}^{-r}.
\end{align*}
That means that the non-negative Borel measure $\la$ is associated
to the non-decreasing and left-continuous function $\vp$ given on
$(a,b)$ by
$$
\vp (x) : = \|u\|_{q,[x,b),\nu}^{-r}, \qq x \in (a,b).
$$
Consequently,
\begin{equation}\label{eq.10.1.7}
\int_{(a,b)} \|w\|_{p,[x,b),\mu}^r\,d\la = \int_{(a,b)}
\|w\|_{p,[x,b),\mu}^r\,d \left( \|u\|_{q,[x,b),\nu}^{-r}\right).
\end{equation}
The result now follows from \eqref{eq.10.1.3}-\eqref{eq.10.1.7}.

{\rm (iii)} Let $0 < p < +\i$, $q = +\i$. By Theorem
\ref{thm.1.1.1}, inequality \eqref{eq.1.1.4} holds for all $g \in
B^+(I)$ if and only if
\begin{equation}\label{eq.10.1.8}
\int_{(-b,-a)} \left(
\frac{\tilde{w}(x)}{\|\tilde{u}\|_{\i,(-b,x),\tilde{\nu}}}\right)^p\,d\tilde{\mu}(x)
< \i.
\end{equation}
By \eqref{eq.10.2.2} and \eqref{eq.10.2.1}, we have that
$\|\tilde{u}\|_{\infty,(-b,x),\tilde{\nu}} =
\|u\|_{\infty,(-x,b),\nu}$
\begin{align*}
\int_{(-b,-a)} \left(
\frac{\tilde{w}(x)}{\|\tilde{u}\|_{\i,(-b,x),\tilde{\nu}}}\right)^p\,d\tilde{\mu}(x)
& = \int_{(-b,-a)} \left(
\frac{\tilde{w}(x)}{\|u\|_{\infty,(-x,b),\nu}}\right)^p\,d\tilde{\mu}(x)
\\
& =  \int_{(a,b)} \left(
\frac{w(x)}{\|u\|_{\i,(x,b),\nu}}\right)^p\,d\mu(x),
\end{align*}
and we see that \eqref{eq.10.1.8} coincides with \eqref{eq.1.3}.

The equivalency in \eqref{eq.1.3} can be shown by the same
argumentations as in the case (i) and (ii) or by an integration by
parts formula.
\end{proof}

\begin{rem}
Let $q < +\i$. Then
$$
\|u\|_{q,[x-,b),\nu} : = \lim_{s\rw t-} \|u\|_{q,[s,b),\nu} =
\|u\|_{q,[x,b),\nu} \qq\mbox{for all}\qq x \in I,
$$
which allows us to combine the cases (ii) and (iii) of Theorem
\ref{thm.1.2.1}:

{$\rm (ii)^{\prime}$} Let $0 < p < q \le +\i$ and $1 / r = 1 / p - 1
/ q$. Then inequality \eqref{eq.1.1.4} holds for all $g \in B^+ (I)$
if and only if
\begin{equation*}
B_2^{\prime} : = \left( \int_{(a,b)} \|w\|_{p,[x,b),\mu}^{r} d
\left( \|u\|_{q,[x-,b),\nu}^{-r} \right) \right)^{1 / r} +
\|w\|_{p,(a,b),\mu} \|u\|_{q,(a,b),\nu}^{-1} < \i.
\end{equation*}
The best possible constant $c$ in \eqref{eq.1.1.4} satisfies $c \ap
B_2^{\prime}$.
\end{rem}

\section{Inequalities involving three measures}\label{s.5}

Now, we consider the inequality
\begin{equation}\label{eq.9.1.1}
\|g\|_{p,(a,b),\la} \le c \|u(x)
\|g\|_{\i,S_x,\mu}\|_{q,(a,b),\nu},\qq g \in B^+(I),
\end{equation}
for all non-negative Borel measurable functions $g$ on the interval
$(a,b) \subseteq \R$, where $0 < p \le +\i$, $0 < q \le +\i$, $\la$,
$\mu$ and $\nu$ are non-negative Borel measures on $(a,b)$, $u$ is a
weight function on $I$ and either $S_x = (a,x)$ or $S_x = (x,b)$ for
all $x \in I$.

By the same way as it has been done in \cite{EvGogOp}, it is easy to
show that in order to characterize the validity of \eqref{eq.9.1.1}
it is enough to characterize the validity of the inequality
\begin{equation}\label{eq.9.1.2}
\|gw\|_{p,(a,b),\mu} \le c \|u(x)
\|g\|_{\i,S_x,\mu}\|_{q,(a,b),\nu},\qq g \in B^+(I).
\end{equation}
\begin{thm}\label{thm.9.1.1}
Assume that $0 < p,\,q \le +\i$. Let $\la$, $\mu$ and $\nu$ be
non-negative Borel measures on $I = (a,b) \subseteq \R$ and let $u
\in B^+(I)$. Then inequality \eqref{eq.9.1.1} holds for all $g \in
B^+(I)$ if and only if the measure $\la$ is absolutely continuous
with respect to $\mu$ and inequality \eqref{eq.9.1.2} with $w : =
(d\la / d\mu)^{1/p}$ holds for all $g \in B^+(I)$.
\end{thm}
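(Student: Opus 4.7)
The plan is to reduce \eqref{eq.9.1.1} to \eqref{eq.9.1.2} via the Radon--Nikodym theorem; the decisive observation is that the right-hand side of \eqref{eq.9.1.1} vanishes whenever $g$ is $\mu$-a.e.\ zero, so finiteness of the best constant forces $\la \ll \mu$.

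For sufficiency, I would suppose $\la \ll \mu$ with density $h := d\la/d\mu$, set $w := h^{1/p}$, and observe that for every $g \in B^+(I)$,
\[
\|g\|_{p,(a,b),\la}^p = \int_{(a,b)} g^p\,d\la = \int_{(a,b)} g^p h\,d\mu = \|gw\|_{p,(a,b),\mu}^p.
\]
Hence \eqref{eq.9.1.2} immediately implies \eqref{eq.9.1.1} with the same constant $c$.

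For necessity, assume \eqref{eq.9.1.1} holds with some finite $c$. First I would establish $\la \ll \mu$: given a Borel set $E \subseteq I$ with $\mu(E)=0$, I would test \eqref{eq.9.1.1} with $g = \chi_E$. Since $\chi_E$ vanishes $\mu$-a.e., the very definition of $\|\cdot\|_{\i,\cdot,\mu}$ gives $\|\chi_E\|_{\i,S_x,\mu} = 0$ for every $x \in I$, so the right-hand side of \eqref{eq.9.1.1} is $0$. This forces $\|\chi_E\|_{p,(a,b),\la} = 0$ and hence $\la(E)=0$, proving $\la \ll \mu$. The Radon--Nikodym theorem then furnishes $h \in B^+(I)$ with $d\la = h\,d\mu$, and with $w := h^{1/p}$ the identity displayed above converts \eqref{eq.9.1.1} into \eqref{eq.9.1.2} with the same constant.

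I do not expect a substantive obstacle: the only delicate point is the test-function step, which uses that $\|\cdot\|_{\i,\cdot,\mu}$ is insensitive to modifications on $\mu$-null sets. The case $p = +\i$ is handled in exactly the same way, with $h^{1/p}$ interpreted as $\chi_{\{h>0\}}$ so that $\|g\|_{\i,(a,b),\la} = \|gw\|_{\i,(a,b),\mu}$ for every $g \in B^+(I)$; the argument for $\la \ll \mu$ is unchanged since $\|\chi_E\|_{\i,(a,b),\la} = 0$ still forces $\la(E)=0$.
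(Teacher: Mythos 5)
Your proposal is correct and follows essentially the same route as the paper's proof: testing \eqref{eq.9.1.1} with $g=\chi_E$ for $\mu$-null $E$ to force $\la\ll\mu$, invoking Radon--Nikodym, and converting $\|g\|_{p,(a,b),\la}$ into $\|gw\|_{p,(a,b),\mu}$ with $w=(d\la/d\mu)^{1/p}$. Your explicit treatment of $p=+\i$ via $w=\chi_{\{h>0\}}$ is in fact slightly more careful than the paper's, which leaves the meaning of $v^{1/p}$ for $p=+\i$ implicit.
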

\begin{proof}
Assume that \eqref{eq.9.1.1} holds for all $g \in B^+(I)$. Let $E
\subseteq I$ be such that $\mu(E) = 0$ and put $g = \chi_E$. Then
$\|g\|_{\i,S_x,\mu} = 0$ for all $x \in I$. Therefore, the
right-hand side of \eqref{eq.9.1.1} is zero, which implies that $\la
(E) = 0$, when $0 < p < +\i$, for $\la (E) = \|g\|_{p,(a,b),\la}^p =
0$, and when $p = +\i$, for $\|g\|_{\i,(a,b),\la} =
\|\chi_E\|_{\i,(a,b),\la} = 0$. Hence, $\la$ is absolutely
continuous with respect to $\mu$, and, by the Radon-Nykodym theorem,
there is $v \in B^+(I)$ such that $d\la = v d\mu$. Putting $w =
v^{1/p}$, we have that $d\la = w^p d\mu$. Consequently, for any $g
\in B^+(I)$, we can rewrite the left-hand side of \eqref{eq.9.1.1}
as
$$
\|g\|_{p,(a,b),\la} = \|gw\|_{p,(a,b),\mu},
$$
and our claim follows.
\end{proof}

{\bf Acknowledgment.} We thank the anonymous referee for his
remarks, which have improved the final version of this paper.

\begin{bibdiv}
\begin{biblist}

\bib{EvGogOp}{article}{
   author={Evans, W. D.},
   author={Gogatishvili, A.},
   author={Opic, B.},
   title={The reverse Hardy inequality with measures},
   journal={Math. Inequal. Appl.},
   volume={11},
   date={2008},
   number={1},
   pages={43--74},
   issn={1331-4343},
   review={\MR{2376257 (2008m:26029)}},
   doi={10.7153/mia-11-03},
}

\bib{F}{book}{
    author={Folland, G. B.},
    title={Real analysis},
    series={Pure and Applied Mathematics (New York)},
    edition={2},
    note={Modern techniques and their applications;
        A Wiley-Interscience Publication},
    publisher={John Wiley \& Sons, Inc., New York},
    date={1999},
    pages={xvi+386},
    isbn={0-471-31716-0},
    review={\MR{1681462 (2000c:00001)}},
}

\bib{gp1}{article}{
   author={Gogatishvili, A.},
   author={Pick, L.},
   title={Discretization and anti-discretization of rearrangement-invariant
   norms},
   journal={Publ. Mat.},
   volume={47},
   date={2003},
   number={2},
   pages={311--358},
   issn={0214-1493},
   review={\MR{2006487 (2005f:46053)}},
}

\bib{kp}{book}{
   author={Kufner, A.},
   author={Persson, L.-E.},
   title={Weighted inequalities of Hardy type},
   publisher={World Scientific Publishing Co. Inc.},
   place={River Edge, NJ},
   date={2003},
   pages={xviii+357},
   isbn={981-238-195-3},
   review={\MR{1982932 (2004c:42034)}},
}

\bib{Le1}{article}{
   author={Leindler, L.},
   title={Inequalities of Hardy-Littlewood type},
   language={English, with Russian summary},
   journal={Anal. Math.},
   volume={2},
   date={1976},
   number={2},
   pages={117--123},
   issn={0133-3852},
   review={\MR{0427566 (55 \#597)}},
}

\bib{Le2}{article}{
   author={Leindler, L. },
   title={On the converses of inequalities of Hardy and Littlewood},
   journal={Acta Sci. Math. (Szeged)},
   volume={58},
   date={1993},
   number={1-4},
   pages={191--196},
   issn={0001-6969},
   review={\MR{1264231 (95j:40001)}},
}

\bib{ok}{book}{
   author={Opic, B.},
   author={Kufner, A.},
   title={Hardy-type inequalities},
   series={Pitman Research Notes in Mathematics Series},
   volume={219},
   publisher={Longman Scientific \& Technical},
   place={Harlow},
   date={1990},
   pages={xii+333},
   isbn={0-582-05198-3},
   review={\MR{1069756 (92b:26028)}},
}

\bib{ru}{book}{
   author={Rudin, Walter},
   title={Principles of mathematical analysis},
   series={Second edition},
   publisher={McGraw-Hill Book Co.},
   place={New York},
   date={1964},
   pages={ix+270},
   review={\MR{0166310 (29 \#3587)}},
}
\end{biblist}
\end{bibdiv}

\

\

Rza Mustafayev\\
Department of Mathematics, Faculty of Science and Arts, Kirikkale
University, 71450 Yahsihan, Kirikkale, Turkey\\
E-mail: rzamustafayev@gmail.com\\

Tugce {\"U}nver \\
Department of Mathematics, Faculty of Science and Arts, Kirikkale
University, 71450 Yahsihan, Kirikkale, Turkey\\
E-mail: tugceunver@gmail.com \\

\end{document}